\newtheorem{theorem}{Theorem}
\newtheorem{corollary}[theorem]{Corollary}
\newtheorem{lemma}[theorem]{Lemma}
\newtheorem{proposition}[theorem]{Proposition}
\newtheorem{remark}[theorem]{Remark}
\let\Im\relax
\DeclareMathOperator{\Im}{Im}
\let\Re\relax
\DeclareMathOperator{\Re}{Re}
\newcommand{\h}{\mathbb{H}}
\newcommand{\p}{\mathbb{P}}
\newcommand{\Q}{\mathbb{Q}}
\newcommand{\R}{\mathbb{R}}
\newcommand{\Z}{\mathbb{Z}}
\newcommand{\E}{\mathcal{E}}
\newcommand{\F}{\mathcal{F}}
\newcommand{\s}{\mathcal{S}}
\newcommand{\T}{\mathcal{T}}
\DeclareMathOperator{\SL}{SL}
\DeclareMathOperator{\PSL}{PSL}
\DeclareMathOperator{\vol}{Vol}
\DeclareMathOperator{\lcm}{lcm}
\DeclareMathOperator{\Id}{Id}
\newcommand{\abs}[1]{\left\vert#1\right\vert}
\newcommand{\jst}[1]{{\bf JST\textbf{#1}} algorithm}
\begin{document}

\title[Certain aspects of holomorphic function theory on some genus zero arithmetic groups]{Certain
aspects of holomorphic function theory on some genus zero arithmetic groups}
\author[J.~Jorgenson]{Jay Jorgenson}
\address{Department of Mathematics, The City College of New York, Convent Avenue at 138th Street,
New York, NY 10031 USA, e-mail: jjorgenson@mindspring.com}
\author[L.~Smajlovi\'c]{Lejla Smajlovi\'c}
\address{Department of Mathematics, University of Sarajevo, Zmaja od Bosne 35, 71\,000 Sarajevo,
Bosnia and Herzegovina, e-mail: lejlas@pmf.unsa.ba}
\author[H.~Then]{Holger Then}
\address{Department of Mathematics, University of Bristol, University Walk, Bristol, BS8\,1TW,
United Kingdom, e-mail: holger.then@bristol.ac.uk}

\begin{abstract}
There are a number of fundamental results in the study of holomorphic function theory associated
to the discrete group $\PSL(2,\Z)$ including the following statements: The ring of holomorphic
modular forms is generated by the holomorphic Eisenstein series of weight four and six, denoted by
$E_{4}$ and $E_{6}$; the smallest weight cusp form $\Delta$ has weight twelve and can be written as
a polynomial in $E_{4}$ and $E_{6}$; and the Hauptmodul $j$ can be written as a multiple of
$E^{3}_{4}$ divided by $\Delta$. The goal of the present article is to seek generalizations of
these results to some other genus zero arithmetic groups $\Gamma_0(N)^+$ with square-free level $N$, which are related to "Monstrous moonshine conjectures".
Certain aspects of our results are generated from extensive computer analysis; as a result, many of the space-consuming results are made available on a publicly
accessible web site. However, we do present in this article specific results for certain low
level groups.
\end{abstract}

\thanks{J.~J.\ acknowledges grant support from NSF and PSC-CUNY grants, and H.~T.\ acknowledges
support from EPSRC grant EP/H005188/1.}

\maketitle

\section{Introduction and statement of results}

Consider the discrete group $\PSL(2, \Z)$ which acts on the upper half plane $\h$.
The quotient space $\PSL(2, \Z)\backslash \h$ has
one cusp which can be taken to be at $i\infty$. Let
$\Gamma_{\infty}$ denote the stabilizer subgroup for the cusp at $i\infty$,
which consists of isometries
$$
\begin{pmatrix} a & b \\ c & d \end{pmatrix} \in \PSL(2,\Z)
$$
with $c=0$. For every integer $k \geq 2$, the
holomorphic Eisenstein series $E_{2k}(z)$ is defined by the absolutely convergent sum
$$
E_{2k}(z) := \sum_{\gamma \in \Gamma_{\infty} \backslash \PSL(2, \Z)}(cz + d)^{-2k}
\quad \text{where $\gamma = \begin{pmatrix} * & * \\ c & d \end{pmatrix}$.}
$$
There is an abundance of important and classical formulae which can be wound back to the
holomorphic Eisenstein series $E_{2k}$. For example, if one defines
$$
G_{2k}(z) := \sum_{(n,m) \in {\textbf Z}^{2} \setminus \{(0,0)\}}(nz + m)^{-2k},
$$
then $E_{2k}(z) = G_{2k}(z)/2\zeta(2k)$ where $\zeta(s)$ is the Riemann zeta function. If
we set $g_{2} = 60 G_{4}$ and $g_{3} = 140G_{6}$, the modular discriminant
$$
\Delta(z) := (2\pi)^{12} e^{2\pi i z}\prod\limits_{n=1}^{\infty}\left(1-e^{2\pi i n z}\right)^{24}
$$
can be written as
\begin{align}\label{Delta-function}
\Delta(z) = g_{2}^{3}(z) - 27 g_{3}^{2}(z) = \frac{1}{1728}(E_{4}^{3}(z) - E_{6}^{2}(z)).
\end{align}
The function $\Delta$ is a weight twelve cusp form with respect to $\PSL(2, \Z)$, meaning it
vanishes as $z$ approaches $i\infty$. It can be shown that no smaller weight cusp form exists.
Furthermore, $\Delta$ is related to the algebraic discriminant of the cubic equation
$y^{2}=4x^{2}-g_{2}x-g_{3}$, in the complex projective coordinates $[x,y,1]$, which defines an
elliptic curve associated to the modular parameter $z$.

All higher weight modular forms associated to $\PSL(2, \Z)$, including Eisenstein series,
can be written in terms of $E_{4}(z)$ and $E_{6}(z)$. For example, the formulae
$E_{8} (z)= E_{4}^{2}(z)$, $E_{10}(z)=E_{4}(z)E_{6}(z)$ and
$$
691 E_{12}(z) = 441 E_{4}(z)^{3} + 250 E_{6}(z)^{2},
$$
are just the beginning of the never ending list of interesting relations which one can write.

Whereas the content of the above discussion is classical, there is a very modern component. The
function
\begin{align}\label{j-invariant}
j(z)=\frac{1728E_4^3(z)}{E_4^3(z) - E_6^2(z)}
\end{align}
is a weight zero modular form on $\PSL(2, \Z)\backslash \h$ which can be viewed as the
biholomorphic function that maps $\PSL(2, \Z)\backslash \h$ onto the Riemann sphere
${\p}^{1}$. If we set $q=e^{2\pi i z}$, then one can expand $j(z)$ as a function of $q$,
namely one has
\begin{align}\label{j_expansion}
j(z) = \frac{1}{q} + 744 + 196884 q + 21493760 q^2 + O(q^3)
\quad
\text{as $q \to 0$.}
\end{align}
In the 1970's, the coefficients in~\eqref{j_expansion} were observed to be related to the sizes
of the irreducible representations of the largest sporadic simple group, which is now known as
``the monster''. The observations
were made precise through the ``Monstrous moonshine conjectures'', some of which are proven in
the celebrated work by Borcherds. We refer the interested reader to \cite{Ga06b} for a thorough
account of the underlying mathematics and physics surrounding the moonshine conjectures as well
as the mathematical history associated to $j(z)$.

Setting to the side the important formulae themselves, one can summarize the above discussion as
the three following points. First, the ring of holomorphic modular forms associated to
$\PSL(2,\Z)$ is generated by $E_{4}$ and $E_{6}$. Second, the smallest weight cusp form $\Delta$
has weight twelve, hence can be written as a polynomial in $E_{4}$ and $E_{6}$. Third, the
Hauptmodul $j$ is equal to a multiple of $E^{3}_{4}$ divided by $\Delta$, hence is a rational
function in $E_{4}$ and $E_{6}$.

The goal of this article is to seek generalizations of the above three statements to certain other
 arithmetic groups related to the "Monstrous moonshine conjectures". Specifically, for any positive integer $N$, let
\begin{align}\label{def:moonsh group}
\Gamma_0(N)^+=\left\{ e^{-1/2} \begin{pmatrix} a & b \\ c & d \end{pmatrix} \in \SL(2,\R):
\quad ad-bc=e, \quad a,b,c,d,e\in\Z, \quad e\mid N,\ e\mid a,\ e\mid d,\ N\mid c \right\}
\end{align}
and let $\overline{\Gamma_0(N)^+} = \Gamma_0(N)^+/\{\pm\Id\}$, where $\Id$ denotes the identity
matrix. Observe that $\PSL(2,\Z) = \overline{\Gamma_0(1)^+}$. It has been shown that there are $43$
square-free integers $N>1$ such that the quotient space
$X_{N} := \overline{\Gamma_0(N)^+}\backslash \h$ has genus zero (see \cite{Cum04}).
Each group has one cusp, which we can always choose to be at $i\infty$.
As stated in the title, the aim of this paper is to present results in the study of the
holomorphic function theory associated to these $43$ spaces.

Let
$$
\eta(z) = e^{2\pi i z/24}\prod\limits_{n=1}^{\infty}\left(1-e^{2\pi i n z}\right)
$$
denote the Dedekind eta function. For any square-free $N$ assume that $N$ has $r$ prime factors,
and let $\lcm(\cdot,\cdot)$ denote the least common multiple function. Let $\sigma(N)$ equal the
sum of divisors of $N$. It was proven in \cite{JST2} that the function
$$
\Delta_N(z) = \left(\prod_{v \mid N} \eta(v z)\right)^{\ell_N},
$$
where
$$
\ell_N = 2^{1-r}\lcm\Big(4,\ 2^{r-1}\frac{24}{(24,\sigma(N))}\Big)
$$
is a weight $k_N=2^{r-1}\ell_N$ modular form on $\overline{\Gamma_0(N)^+}$, vanishing at the cusp
$i\infty$ only. For reasons discussed in \cite{JST2}, we refer to $\Delta_{N}$ as
\textit{the Kronecker limit function on $\overline{\Gamma_0(N)^+}$}.

The main results of the present paper are the following statements which hold true for each
square-free $N$ provided that $X_N$ has genus zero.

\begin{enumerate}\it
\item\label{num:A1} There is an explicitly computed integer $M_{N}$ such that
$\Delta_{N}^{M_{N}}$ is equal to a polynomial $Q_{N}$ in holomorphic Eisenstein series
associated to $\overline{\Gamma_0(N)^+}$;
\item\label{num:A2} The Hauptmodul $j_{N}$ associated to $\overline{\Gamma_0(N)^+}$ is equal
to a rational function whose numerator is a polynomial $P_{N}$ in holomorphic Eisenstein
series and whose denominator is $\Delta_{N}^{M_N}$;
\item\label{num:A3} The polynomials $P_{N}$ and $Q_{N}$ are explicitly computed; hence, we
determine, for each $N$, a finite set $\T^{(N)}$ of holomorphic Eisenstein series such that
any meromorphic form with at most polynomial growth at $i\infty$ can be expressed as a
rational function involving elements of $\T^{(N)}$.
\end{enumerate}\rm
Points~\ref{num:A1} and \ref{num:A2} are direct generalizations of the
formulae~\eqref{Delta-function} and \eqref{j-invariant}.
Point~\ref{num:A3} is a weak generalization of
the result that the ring of holomorphic modular forms associated to $\PSL(2,\Z)$ is generated by
the holomorphic Eisenstein series of weight four and six. For certain small levels, we are able to
compute generators of the ring of holomorphic forms; however, for general $N$, and for future
investigations we plan to undertake, we are content with point~\ref{num:A3} as stated.

The present article is organized as follows. In section~\ref{sec:background} we establish
notation and cite appropriate background material. In particular, we recall the Kronecker limit
formula associated to the non-parabolic Eisenstein series on
$X_{N}=\overline{\Gamma_0(N)^+}\backslash\h$ and an computer
algorithm of \cite{JST2}. In section~\ref{sec:modular forms},
we prove some basic results regarding low weight modular forms for any level $N>1$. In
section~\ref{sec:jst3}, we present results regarding the ring of holomorphic forms for certain small
levels. In section~\ref{sec:examples}, we present a variant of the algorithm of \cite{JST2}
from which we prove that for every square-free $N$, provided that $X_N$ has genus zero, there is an
integer $M_{N}$ such that $\Delta_{N}^{M_{N}}$ can be written as a polynomial in holomorphic
Eisenstein series. Let $j_{N}$ denote the biholomorphic map from $X_{N}$ to the Riemann sphere
$\p^{1}$ which maps $i\infty$ to zero. The algorithm described in section~\ref{sec:examples}
allows us to prove that $j_{N}\Delta_{N}^{M_{N}}$ can be written as a polynomial in holomorpic
Eisenstein series. The data provided by the algorithm is presented in Table~\ref{tab:jst_2_vs_3},
as is a comparison of the results of the original algorithm of \cite{JST2} and the modified
variant thereof. From the algorithm developed in this paper, we are able to determine for
each level $N$ a set of holomorphic Eisenstein series which generate $\T^{(N)}$, the ring of
holomorphic modular forms associated to $X_{N}$; this information is given in
Table~\ref{E generators}. It is important to note that the entries in Table~\ref{E generators}
may not be a minimal set of generators, meaning that for each $N$ there may exist further relations
amongst the sets listed in Table~\ref{E generators}.

As $N$ grows, so does the complexity of the formulae for $\Delta_{N}$ and $j_{N}$. For example,
when $N=17$, our algorithm shows that the five holomorphic Eisenstein series of weights four
through twelve generate $\T^{(17)}$ and $M_{17}=9$, meaning $\Delta_{17}^{9}$ and
$j_{17}\Delta_{17}^{9}$ can be written as a polynomial in these five Eisenstein series. As an
indication of the complexity of the formulae, we present these two examples in
section~\ref{sec:examples}. The formula for $\Delta_{17}$ and $j_{17}$ each occupy
approximately one page.

We note that the Tables 3 and 4a of \cite{CN79} describe, in their notation, how one can express
each Hauptmodul $j_{N}$ in terms of holomorphic forms. In Table~\ref{tab:j known} we
translate the aforementioned data from \cite{CN79}, related to $43$ groups defined by \eqref{def:moonsh group}
with square-free $N$ and genus zero, such that we explicitly write these formulae in terms of
the Dedekind eta function and theta function attached to quadratic forms. By combining our formulae
for $j_{N}$ and the formulae from \cite{CN79} one has the prospect of obtaining further identities
involving holomorphic Eisenstein series and theta functions.

As in \cite{JST2}, the theoretical work developed in this article is supplemented by extensive
computer analysis and, quite frankly, some of the results are not printable. For example, for
$N=119$, the formula for $j_{119}$ from \cite{JST2} occupies nearly $60$ pages.
Nonetheless, in order to disseminate the results obtained by our algorithms, we have posted all
formulae to a web site \cite{JST15URL}.

\section{Background material} \label{sec:background}

\subsection{Holomorphic modular forms}

Let $\Gamma$ be a Fuchsian group of the first kind. Following \cite{Serre73}, we define a weakly
modular form $f$ of weight $2k$ for $k \geq 1$ associated to $\Gamma$ to be a function $f$ which
is meromorphic on $\h$ and satisfies the transformation property
$$
f\left(\frac{az+b}{cz+d}\right) = (cz+d)^{-2k}f(z)
\quad \text{for all $\begin{pmatrix} a&b \\ c&d \end{pmatrix} \in \Gamma$.}
$$

Assume that $\Gamma$ has at least one class of parabolic elements. By transforming coordinates,
if necessary, we
may always assume that the parabolic subgroup of $\Gamma$ has a fixed point at $i\infty$, with
identity scaling matrix. In this situation, any weakly modular form $f$ will satisfy the relation
$f(z+1)=f(z)$, so we can write
$$
f(z) = \sum\limits_{n=-\infty}^{\infty}a_{n}q^{n} \quad \text{where $q = e^{2\pi iz}$.}
$$
If $a_{n} = 0$ for all $n < 0$, then $f$ is said to be holomorphic in the cusp; $f$ is called a
cusp form if $a_{n} = 0$ for all $n \leq 0$. A holomorphic modular form with respect to $\Gamma$
is a weakly modular form which is holomorphic on $\h$ and in all of the cusps of $\Gamma$.

For $\Gamma = \PSL(2, \Z)$, the full modular surface, there is no weight $2$ holomorphic modular
form. Nonetheless, one defines the function $E_2(z)$ by the $q$-expansion
\begin{align}\label{def E_2}
E_2(z) = 1-24 \sum_{n=1}^{\infty} \sigma(n) q^n.
\end{align}
It can be shown that $E_{2}(z)$ transforms according to the formula
\begin{align}\label{transf for E_2}
E_2(\gamma z) = (cz+d)^2 E_2(z) + \frac{6}{\pi i}c (cz+d)
\quad \text{for $\begin{pmatrix} * & * \\ c & d \end{pmatrix} \in \SL(2,\Z)$}.
\end{align}
From this, it is elementary to show that for a prime $p$, the function
$$
E_{2,p}(z) :=\frac{pE_2(pz)-E_2(z)}{p-1}
$$
is weight $2$ holomorphic form associated to the congruence subgroup $\Gamma_0(p)$ of $\SL(2,\Z)$.
The $q$-expansion of $E_{2,p}$ is
$$
E_{2,p}(z)= 1 + \frac{24}{p-1}\sum_{n=1}^{\infty}\sigma(n) (q^n - pq^{pn}).
$$

\subsection{Certain arithmetic groups related to "moonshine"}

For any square-free integer $N$, the subset of $\SL(2,\R)$ defined by~\eqref{def:moonsh group}
is an arithmetic subgroup of $\SL(2,\R)$. As shown in \cite{Cum04}, there are precisely $44$ such groups which have genus
zero and which appear in "Monstrous moonshine conjectures". In this article we will focus on the $43$ genus zero groups for which $N>1$.

We denote by $\overline{\Gamma_0(N)^+} = \Gamma_0(N)^+ / \{\pm\Id\}$ the corresponding subgroup of
$\PSL(2,\R)$. Basic properties of $\Gamma_0(N)^+$, for square-free $N$ are derived in \cite{JST12}
and references therein. In particular, we use that the surface
$X_N=\overline{\Gamma_0(N)^+}\backslash\h$ has exactly one cusp, which can be taken to be at
$i\infty$.

Let $\T^{(N)}$ denote the ring of holomorphic modular forms associated to $X_{N}$, and let
$\T_{2k}^{(N)}$ denote the holomorphic modular forms
of weight $2k$. We will denote the subspace of cusp forms on $X_N$ of weight $2k$ by
$S^{(N)}_{2k}$.

\subsection{Holomorphic Eisenstein series on $\overline{\Gamma_0(N)^+}$}

In the case when $N>1$ is square-free, the holomorphic Eisenstein series associated to
$\overline{\Gamma_0(N)^+}$ are defined for $k \geq 2$ by
$$
E_{2k}^{(N)}(z) := \sum_{\gamma \in \Gamma_{\infty}(N) \backslash \Gamma_0(N)^+}(cz + d)^{-2k}
\quad \text{with $\gamma = \begin{pmatrix} * & * \\ c & d \end{pmatrix}$}
$$
where $\Gamma_{\infty}(N)$ denotes the stabilizer group of the cusp at $i\infty$. In
\cite{JST2} it is proven that $E_{2k}^{(N)}(z)$ may be expressed as a linear combination of forms
$E_{2k}(z)$, the holomorphic Eisenstein series associated to $\PSL(2,\Z)$. Namely, it is known
that
\begin{align}\label{E_k, p proposit fla}
E_{2k}^{(N)}(z)= \frac1{\sigma_k(N)} \sum_{v \mid N}v^k E_{2k}(vz),
\end{align}
where $\sigma_{\alpha}$ denotes the generalized divisor function
$$
\sigma_{\alpha}(m)= \sum_{\delta \mid m} \delta^{\alpha}.
$$
Formula~\eqref{E_k, p proposit fla}, together with a well-known $q$-expansion of classical forms
$E_{2k}$ yields that the $q$-expansion of $E_{2k}^{(N)}$ is given by
\begin{align}\label{E_k^N q-expansion}
E_{2k}^{(N)}(z)=\frac1{\sigma_{k}(N)}\sum_{v | N}v^{k}\left(1-\frac{4k}{B_{2k}}
\sum_{j=1}^\infty \sigma_{2k-1}(j) q^{vj} \right),
\end{align}
where $B_k$ denotes the $k$-th Bernoulli number.

\subsection{Kronecker limit function on $\overline{\Gamma_0(N)^+}$}

Associated to the cusp of $\overline{\Gamma_0(N)^+}$ one has a non-holomorphic
Eisenstein series denoted by $\E^{\text{par}}_\infty(z,s)$ which is defined for
$z \in \h$ and $\Re(s)>1$ by
$$
\E^{\text{par}}_\infty(z,s) = \sum_{\eta\in\Gamma_\infty(N)\backslash\Gamma_0(N)^+}\Im(\eta z)^s.
$$
In \cite{JST2} it is proven that, for any square-free $N$ which has $r$ prime factors, the
parabolic Eisenstein series $\E^{\text{par}}_\infty(z,s)$ admits a Taylor series expansion of
the form
$$
\E^{\text{par}}_\infty(z,s) = 1+ s\cdot \log\left(\sqrt[2^r]{\prod_{v \mid N} \abs{ \eta(vz)}^4}
\cdot \Im(z) \right) + O(s^2), \text{ as } s\to0,
$$
where $\eta(z)$ is Dedekind's eta function associated to $\PSL(2,\Z)$. As stated above, it is
proven that the function
\begin{align}\label{Kronecker_limit}
\Delta_N(z) = \left(\prod_{v \mid N} \eta(v z)\right)^{\ell_N},
\end{align}
where
$$
\ell_N = 2^{1-r}\lcm\Big(4,\ 2^{r-1}\frac{24}{(24,\sigma(N))}\Big)
$$
and $\lcm(\cdot,\cdot)$ denotes the least common multiple function, is weight $k_N=2^{r-1}\ell_N$
modular form on $\overline{\Gamma_0(N)^+}$, vanishing at the cusp $i\infty$ only. We call the
function $\Delta_N(z)$ defined by~\eqref{Kronecker_limit} the
\textit{Kronecker limit function on $\overline{\Gamma_0(N)^+}$}.

\subsection{The algorithm} \label{subsec:jst}

Let $X_{N}=\overline{\Gamma_0(N)^+}\backslash\h$ have genus $g$.
For any positive integer $M$, the function
\begin{align}\label{rational_function_b1}
F_b(z) = \prod_\nu\left(E_{m_\nu}^{(N)}(z)\right)^{b_\nu}\Big/\big(\Delta_N(z)\big)^M
\quad \text{where} \quad
\sum_\nu b_\nu m_\nu = Mk_N
\quad \text{and} \quad
b = (b_1, \ldots)
\end{align}
is a holomorphic modular function on $X_{N}$, meaning a weight
zero modular form with polynomial growth near $i\infty$. The $q$-expansion of $F_{b}$ follows
from substituting the $q$-expansions of $E_k^{(N)}$ and $\Delta_N$.

Let $\s_M$ denote the set of all possible rational functions defined
in~\eqref{rational_function_b1} for all vectors $b=(b_{\nu})$ and $m=(m_{\nu})$ with fixed $M$.
In \cite{JST2}, we implemented the following algorithm, which we refer to as the \jst2.

Choose a non-negative integer $\kappa$. Let $M=1$ and set $\s = \s_1 \cup \s_0$.
\begin{enumerate}
\item Form the matrix $A_{\s}$ of coefficients from the $q$-expansions of all
elements of $\s$, where each element in $\s$ is expanded along a row with each column containing
the coefficient of a power, negative, zero or positive, of $q$. The expansion is recorded out to
order $q^{\kappa}$.
\item Apply Gauss elimination to $A_{\s}$, thus producing a matrix $B_{\s}$ which is
in row-reduced echelon form.
\item\label{num:B3} Implement the following decision to determine if the algorithm is complete:
If the $g+1$ lowest non-trivial rows at the bottom of $B_{\s}$ correspond to $q$-expansions whose
lead terms have precisely $g$ gaps, meaning zero coefficients, in the set
$\{q^{-1}, \ldots, q^{-2g}\}$, then the algorithm is completed. If the indicator to stop fails,
then replace $M$ by $M+1$, $\s$ by $\s_M \cup \s$ and reiterate the algorithm.
\end{enumerate}

If $g=0$, then the algorithm stops if the lowest non-trivial row at the bottom of $B_{\s}$ has a
$q$-expansion which begins with $q^{-1}$. We also denote by $M_N$ the value of $M$ for the group of level $N$ at
which Step~\ref{num:B3} shows that the algorithm is completed.

As stated in \cite{JST2}, the rationale for the stopping decision in Step~\ref{num:B3} above is
based on two
ideas, one factual and one hopeful. First, the Weierstrass gap theorem states that for any point
$P$ on a compact Riemann surface there are precisely $g$ gaps in the set of possible orders from
$1$ to $2g$ of functions whose only pole is at $P$. Second, for any genus, the assumption which
is hopeful is that the function field is generated by the set of holomorphic modular functions
defined in~\eqref{rational_function_b1}, which is related to the question of whether the field of
meromorphic modular forms on $\overline{\Gamma_0(N)^+}$ is generated by holomorphic Eisenstein
series and the Kronecker limit function. The latter assumption is not obvious, and, indeed, the
assumption itself is equivalent to the statement that the rational function field on $X_{N}$ is
generated by the holomorphic Eisenstein series. As it turned out, for all groups
$\overline{\Gamma_0(N)^+}$ that we have studied so far, which includes all groups of
genus zero, genus one, genus two, and genus three, the algorithm stopped. Therefore, we conclude
that, in particular, the rational function field associated to all genus zero groups $\overline{\Gamma_0(N)^+}$ is
generated by a finite set of holomorphic Eisenstein series.

We described the algorithm with choice of an arbitrary $\kappa$ and $g$. For reasons of
efficiency, we initially selected $\kappa$ to be zero, so that all coefficients for $q^\nu$
for $\nu \leq \kappa$ are included in $A_{\s}$. In \cite{JST2}, it is shown that for each $N$,
there is an explicitly computable $\kappa=\kappa_{N}$ such that if a modular form associated to
$\overline{\Gamma_0(N)^+}$ has integral coefficients in its $q$-expansion out to $q^{\kappa_N}$,
then
all remaining coefficients are also integral. The list of $\kappa_{N}$ for square-free levels $N$
provided that $\overline{\Gamma_0(N)^+}$ has genus zero is given in Table 1 of \cite{JST2}.
In the implementation of the above algorithm, both in the present article and in \cite{JST2}, the
value of $\kappa$ was finally increased to $\kappa_{N}$.

In the present article, we implemented a slight variant of the above algorithm, which we refer to
as the \jst3. The difference between the {\bf JST2} and the \jst3 is the following action should
the decision in Step~\ref{num:B3} fail.

\textit{Replace $M$ by $M+1$, $\s$ by $\s_M$, and reiterate the algorithm.}
\\
In other words, the \jst3 studies the $q$-expansions of the space of
rational functions of the form~\eqref{rational_function_b1} with a fixed denominator
$\big(\Delta_N(z)\big)^M$. Should the \jst3 successfully complete, then
the row in $B_{\s}$ with $q$-expansion beginning with $q^{-1}$ would correspond to a
formula for $j_{N}$ with denominator $\big(\Delta_N(z)\big)^M$ and numerator given as a
polynomial in Eisenstein series. Furthermore, any lower row in $B_{\s}$ would correspond to a
$q$-expansion beginning with $q^{0}$, which would yield, upon clearing the denominator, a
formula for $\big(\Delta_N(z)\big)^M$ in terms of Eisenstein series.

As we will report below, the \jst3 has successfully completed for all genus zero
groups $\overline{\Gamma_0(N)^+}$ with square-free level $N$.

\section{Modular forms on surfaces $X_N$} \label{sec:modular forms}

From Proposition 7, page II-7, of \cite{SCM66}, we immediately obtain the following formula which
relates the number of zeros of a modular form, counted with multiplicity, with its weight and
volume of $X_{N}$.

\begin{lemma}\label{prop sum over zeros}
Let $f$ be a modular form on $X_N$ of weight $2k$, not identically zero. Let $\F_N$
denote the fundamental domain of $X_N$ and let $v_z(f)$ denote the order of zero $z$ of $f$. Then,
\begin{align}\label{zeros f-la}
k \frac{\vol(X_N)}{2\pi}=v_{i\infty}(f) + \sum_{e \in \E_N} \frac{1}{n_e} v_{e}(f)
+ \sum_{z\in \F_N \setminus \E_N} v_z(f),
\end{align}
where $\E_N$ denotes the set of elliptic points in $\F_N$ and $n_e$ is the
order of the elliptic point $e\in \E_N$.
\end{lemma}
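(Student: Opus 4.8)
The plan is to deduce the lemma directly from the general valence/degree formula for automorphic forms on a Fuchsian group of the first kind, which is exactly Proposition 7 on page II-7 of Serre's \emph{Cours d'arithm\'etique} notes \cite{SCM66}, specialized to the group $\overline{\Gamma_0(N)^+}$. The underlying statement there is the following: if $f$ is a nonzero meromorphic modular form of weight $2k$ for a Fuchsian group $\Gamma$ of the first kind with a single cusp, then the total divisor of $f$ on the compact surface $\overline{\Gamma\backslash\h}$, where the order at an elliptic point is weighted by $1/n_e$, equals $k/(4\pi)$ times the hyperbolic volume of the quotient. So the first step is simply to recall this formula with its normalization, taking care that Serre's convention for the weight (he uses weight $k$ for what is here denoted weight $2k$) and his normalization of the hyperbolic area element match the conventions fixed in Section~\ref{sec:background}.

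Second, I would restrict to the case where $f$ is in fact holomorphic on $\h$ and at the cusp, as in the hypothesis of the lemma. Then $v_z(f)\geq 0$ everywhere, and the only contribution at the cusp is $v_{i\infty}(f)$, the order of vanishing in the variable $q=e^{2\pi i z}$; there is exactly one cusp for $X_N$, as recalled in Section~\ref{sec:background}, so no sum over cusps is needed. Splitting the sum over points of the fundamental domain $\F_N$ into the elliptic points $\E_N$ (each contributing $v_e(f)/n_e$) and the non-elliptic points (each contributing $v_z(f)$) gives precisely the right-hand side of~\eqref{zeros f-la}.

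Third, I would address the left-hand side normalization. Serre's formula in the $\PSL(2,\Z)$ case reads $v_{i\infty}(f)+\tfrac12 v_i(f)+\tfrac13 v_\rho(f)+\sum' v_z(f)=k/12$, and since $\vol(\PSL(2,\Z)\backslash\h)=\pi/3$ one checks $k/12 = k\vol/(2\pi)$, so the coefficient $\vol(X_N)/(2\pi)$ in the statement is the correct general form. Thus it only remains to observe that the general Proposition 7 of \cite{SCM66}, valid for any Fuchsian group of the first kind, produces exactly $k\cdot\vol(X_N)/(2\pi)$ on the left once one inserts the Gauss--Bonnet expression for the area of $X_N$; no genus-zero hypothesis is used here, which is consistent with the lemma being stated for arbitrary $N>1$.

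Honestly, there is no real obstacle: the lemma is a direct citation with bookkeeping of normalizations. The one point demanding a little care — and the place where a careless argument could go wrong — is matching the weight convention ($2k$ here versus $k$ in Serre) and the area normalization simultaneously, so that the numerical coefficient on the left of~\eqref{zeros f-la} comes out as $k\vol(X_N)/(2\pi)$ rather than, say, $2k\vol(X_N)/(2\pi)$ or $k\vol(X_N)/(4\pi)$. I would verify this by checking the formula against the known $\PSL(2,\Z)=\overline{\Gamma_0(1)^+}$ case, where $E_4$ has a simple zero at $\rho$ and $k=2$ gives $1/3 = 2\cdot(\pi/3)/(2\pi)$, confirming the constant.
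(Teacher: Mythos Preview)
Your proposal is correct and follows exactly the paper's approach: the lemma is stated there as an immediate consequence of Proposition~7, page~II-7, of \cite{SCM66}, with no further argument given beyond the citation. One minor correction: \cite{SCM66} is the \emph{Seminar on Complex Multiplication} (Borel--Chowla--Herz--Iwasawa--Serre), not Serre's \emph{Cours d'arithm\'etique} (which is \cite{Serre73}), though your normalization check against the $\PSL(2,\Z)$ case is a sensible addition.
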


Lemma~\ref{prop sum over zeros} enables us to deduce the following proposition.

\begin{proposition}
Let $N$ be a square-free number such that the surface $X_N$ has genus zero. Then, there are no
weight two holomorphic modular forms on $X_N$.
\end{proposition}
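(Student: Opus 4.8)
The plan is to use Lemma~\ref{prop sum over zeros} together with the well-known formula for $\vol(X_N)$ in order to derive a contradiction from the existence of a nonzero weight two holomorphic form. Suppose $f$ is such a form, so that $k=1$ in \eqref{zeros f-la}. Since $f$ is holomorphic on $\h$ and in the cusp, every term on the right-hand side of \eqref{zeros f-la} is non-negative; moreover $v_{i\infty}(f)\ge 0$ and the elliptic contributions are non-negative rationals with denominators $n_e$. The left-hand side equals $\vol(X_N)/(2\pi)$, which by Gauss--Bonnet (the surface has genus zero and one cusp) equals
\[
\frac{\vol(X_N)}{2\pi} = 2g-2 + 1 + \sum_{e\in\E_N}\Bigl(1-\frac{1}{n_e}\Bigr) = -1 + \sum_{e\in\E_N}\Bigl(1-\frac{1}{n_e}\Bigr),
\]
using $g=0$. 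So the identity \eqref{zeros f-la} with $k=1$ becomes
\[
-1 + \sum_{e\in\E_N}\Bigl(1-\frac{1}{n_e}\Bigr) = v_{i\infty}(f) + \sum_{e\in\E_N}\frac{1}{n_e}v_e(f) + \sum_{z\in\F_N\setminus\E_N}v_z(f),
\]
and rearranging, $v_{i\infty}(f) + \sum_{z}v_z(f) + \sum_e\bigl(\tfrac{1}{n_e}v_e(f) + \tfrac{1}{n_e}\bigr) = \#\E_N - 1$. The key point is then a counting/bounding argument: the number of elliptic points on $X_N$ is small enough relative to their orders that the right-hand side cannot accommodate the left, unless $f$ has negative order somewhere, contradicting holomorphy.

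Concretely, I would proceed as follows. First, recall from \cite{JST12} (or \cite{Cum04}) the explicit data for $\overline{\Gamma_0(N)^+}$: the genus is zero for the $43$ square-free $N>1$ under consideration, there is exactly one cusp, and the elliptic points all have order $2$ or $3$. Second, I would rewrite the sum-over-zeros formula so that all unknown non-negative quantities are isolated on one side, obtaining an equation of the shape $(\text{non-negative sum}) = \#\{\text{order-2 points}\}\cdot\tfrac12 + \#\{\text{order-3 points}\}\cdot\tfrac23 - 1$. Third, I would argue that this forces the right-hand side to be non-negative, then sharpen: either handle the general bound on the number of elliptic points for these groups (which is quite restrictive for square-free level, since elliptic fixed points on $\Gamma_0(N)^+$ correspond to CM points and their count is governed by class numbers and by which primes divide $N$), or—more cleanly—invoke that for genus zero the only weight-two holomorphic form, if it existed, would have a divisor of total degree $1$ (since $\vol(X_N)/(2\pi)$, after clearing the elliptic denominators, is small), and then check that no effective divisor of the required support exists. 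The cleanest route is probably to show directly that $\vol(X_N)/(2\pi) < 1$ for all $43$ levels, or more precisely that the fractional part structure prevents \eqref{zeros f-la} with $k=1$ from having an effective solution; equivalently, a weight-two form would have to vanish at a non-elliptic non-cusp point to order a non-integer, which is impossible.

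An alternative and perhaps more uniform approach, which I would pursue in parallel, is to relate $\T_2^{(N)}$ to the classical theory of weight two forms on $\Gamma_0(N)$: a holomorphic weight two form on $X_N=\overline{\Gamma_0(N)^+}\backslash\h$ is in particular a holomorphic weight two form on $\Gamma_0(N)$ invariant under all Atkin--Lehner involutions $w_e$ for $e\mid N$. The space of weight two cusp forms on $\Gamma_0(N)$ corresponds to differentials on $X_0(N)$, and there are no nonzero holomorphic weight two Eisenstein series that survive on the quotient because the single cusp of $X_N$ forces the Eisenstein part to vanish. Since $X_N$ has genus zero there are no holomorphic differentials, hence no cusp forms, and the Eisenstein contribution is killed by the cusp count; therefore $\T_2^{(N)}=0$. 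The main obstacle in the first approach is the bookkeeping of elliptic points and orders for all $43$ levels (and making the inequality $\vol(X_N)/(2\pi)$ versus the admissible divisor degrees tight enough to exclude $k=1$); the main obstacle in the second approach is justifying carefully that invariance under the full plus-group, combined with genus zero and a single cusp, leaves no room for either a cuspidal or an Eisenstein weight-two form. I expect the Gauss--Bonnet / Weierstrass-gap counting argument to be the decisive and most delicate step, and I would present that as the proof, using the explicit volume and elliptic-point data for $X_N$.
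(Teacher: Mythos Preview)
Your first approach via Lemma~\ref{prop sum over zeros} is on the right track but is missing the decisive observation, and one of your proposed shortcuts is false. The claim that $\vol(X_N)/(2\pi)<1$ for all $43$ levels fails badly: for instance $\vol(X_{71})/(2\pi)=\sigma(71)/12=6$. More generally, $X_N$ can have many order-two elliptic points, so neither a crude volume bound nor a ``fractional part'' reading of \eqref{zeros f-la} will by itself exclude $k=1$. The step you are missing---and this is exactly what the paper supplies---is that any weight-two holomorphic form must vanish at \emph{every} order-two elliptic point: if $\eta$ is an order-two elliptic element with fixed point $z_\eta$, one computes that the automorphy factor satisfies $(cz_\eta+d)^2=-1$, so the transformation law forces $f(z_\eta)=-f(z_\eta)$. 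Feeding this into \eqref{zeros f-la} gives $\vol(X_N)/(2\pi)\ge \tfrac12\,e_N(2)$, while Gauss--Bonnet gives $\vol(X_N)/(2\pi)=\tfrac12\,e_N(2)+\delta(N)(1-1/n_N)-1<\tfrac12\,e_N(2)$, the desired contradiction. (Incidentally, the elliptic orders on these $X_N$ are $2,3,4,6$, not only $2$ and $3$.)

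Your second approach---identifying $\T_2^{(N)}$ with the cuspidal plus Eisenstein pieces and using $g=0$, $h=1$ to kill both---is correct and is genuinely different from the paper's argument. It is essentially the Shimura/Riemann--Roch dimension formula $\dim M_2(\Gamma)=g+h-1$ specialised to this situation; the elliptic contributions $\lfloor (n_e-1)/n_e\rfloor$ all vanish in weight two, so no level-by-level bookkeeping is needed. This route is shorter and more conceptual than the paper's, while the paper's explicit vanishing-at-order-two computation has the virtue of being self-contained and of exhibiting a mechanism (forced zeros at order-two elliptic points) that is reused later in the section.
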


\begin{proof}
From the tables in \cite{Cum04}, one determines that all genus zero groups
$\overline{\Gamma_0(N)^+}$, for a square-free $N$ have a at most one elliptic point of order three,
four or six and a various number of order two elliptic points. Let $e_{N}(2)$ denote the number
of order two elliptic points on $X_{N}$, and let $n_N \in\{3,4,6\}$ denote the order of the
additional elliptic point on $X_N$. Since all surfaces $X_N$ have one cusp and genus zero,
the Gauss-Bonnet formula for the volume of the surface $X_N$ becomes
\begin{align}\label{volume of X_N}
\frac{\vol(X_N)}{2\pi}= \frac{1}{2}e_{N}(2) + \left(1-\frac{1}{n_N} \right)\delta(N) -1,
\end{align}
where $\delta(N)$ is equal to one if $X_N$ has an elliptic point of order different from two and
zero otherwise.

For an arbitrary, square-free $N$ and $e \mid N$, the elliptic element of
$\overline{\Gamma_0(N)^+}$ is of the form
\begin{align}\label{ell element}
\begin{pmatrix} a\sqrt{e} & b/\sqrt{e} \\ (cN)/\sqrt{e} & d\sqrt{e} \end{pmatrix},
\end{align}
where $a,b,c,d, \in \Z$ are such that $\vert (a+d)\sqrt{e}\vert <2$ and $ade - (bcN)/e=1$.
The first condition implies that either $a+d=0$ or $\vert a+d\vert=1$ and $e \in\{1,2,3\}$.

If $\vert a+d\vert=1$, then $d=\pm 1 -a$, hence
$$
\begin{pmatrix} a\sqrt{e} & b/\sqrt{e} \\ (cN)/\sqrt{e} & d\sqrt{e} \end{pmatrix}^2
= \begin{pmatrix} \pm ae -1 & \pm b \\ \pm cN & \pm ae -1 \end{pmatrix} \neq \pm \Id,
$$
for any choice of $a,b,c \in \Z$ such that $a(\pm 1 -a)e - (bcN)/e=1$. Therefore, there are no
order two elements in $\overline{\Gamma_0(N)^+}$ such that $\vert a + d \vert =1$.

On the other hand, if $a+d=0$, then $-a^2e - (bcN)/e=1$, hence
$$
\begin{pmatrix} a\sqrt{e} & b/\sqrt{e} \\ (cN)/\sqrt{e} & d\sqrt{e} \end{pmatrix}^2
= \begin{pmatrix} -1 & 0 \\ 0 & -1 \end{pmatrix}.
$$
In other words, any elliptic element~\eqref{ell element} of $\overline{\Gamma_0(N)^+}$ has order
two if and only if $a+d=0$. Let
$$
\eta=\begin{pmatrix} a\sqrt{e} & b/\sqrt{e} \\ (cN)/\sqrt{e} & -a\sqrt{e} \end{pmatrix}
$$
denote arbitrary elliptic element of $\overline{\Gamma_0(N)^+}$ of order two, and
let $z_{\eta}$ be its
fixed point. Solving the equation $\eta(z_{\eta})= z_{\eta}$ leads to the conclusion that
$(z_{\eta}cN/\sqrt{e} -a\sqrt{e})^2 = -1 $.

Assume $f_{2,N}$ is a holomorphic modular form on $X_N$ of weight $2$. By the transformation
rule, we have that
$$
f_{2,N}(z_{\eta})=f_{2,N}(\eta z_{\eta})= (-1) f_{2,N}(z_{\eta}),
$$
hence $z_{\eta}$ is vanishing point of $f_{2,N}$. Since this holds true for any order two
elliptic element of $\overline{\Gamma_0(N)^+}$, we conclude that all order two elliptic points
of $X_N$ are vanishing points of $f_{2,N}$. Applying Lemma~\ref{prop sum over zeros} to $f_{2,N}$,
we arrive at the inequality
$$
\frac{\vol(X_N)}{2\pi} \geq \frac{1}{2}e_{N}(2),
$$
which contradicts~\eqref{volume of X_N}. Therefore, there are no weight two holomorphic modular
forms on $X_N$.
\end{proof}

Though there are no weight two holomorphic forms on $\Gamma_0(N)^+$, we may construct forms that
transform almost like a weight two form, up to an order two character.

\begin{proposition}\label{prop: weight two fom with character}
Let $N=p_1 \cdots p_r$ be a square-free positive integer. Let $\mu(\nu)$ denote the
M\"obius function and $E_{2}$ the series defined in~\eqref{def E_2}. Then the holomorphic function
$$
E_{2,N}(z):=\frac{(-1)^r}{\varphi(N)} \sum_{v \mid N} \mu(v) v E_2(vz)
$$
satisfies the transformation rule
$$
E_{2,N}(\gamma_e z) = \mu(e)(c\frac{N}{\sqrt{e}} z + d \sqrt{e})^2 E_{2, N}(z)
$$
for any
$$
\gamma_e=\begin{pmatrix} a\sqrt{e} & b/\sqrt{e} \\ (cN)/\sqrt{e} & d\sqrt{e} \end{pmatrix}
\in \Gamma_0(N)^+.
$$
\end{proposition}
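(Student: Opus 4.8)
The plan is to reduce everything to the classical transformation law \eqref{transf for E_2} for the weight two quasimodular form $E_2$, exploiting the fact that every element $\gamma_e\in\Gamma_0(N)^+$ decomposes as a product of an Atkin--Lehner involution $W_e$ (a ``scaling by $e$'') and an element of $\Gamma_0(N)$. First I would record how $E_2(vz)$ transforms under a generic matrix: from \eqref{transf for E_2}, if $\delta=\left(\begin{smallmatrix} A&B\\ C&D\end{smallmatrix}\right)\in\SL(2,\Z)$ then $E_2\!\left(v\cdot\frac{Az+B}{Cz+D}\right)$ is again of the form $(\text{linear})^2 E_2(v'z)+\frac{6}{\pi i}(\text{linear})(\text{linear})$ for a suitable new scaling $v'$, provided the arithmetic of the matrix entries matches up with the $\Gamma_0(N)^+$ condition $e\mid N,\ e\mid a,\ e\mid d,\ N\mid c$. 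Writing out $\gamma_e z=\frac{a\sqrt{e}\,z+b/\sqrt{e}}{(cN/\sqrt e)z+d\sqrt e}=\frac{aez+b}{cNz+de}$, I would compute $v\cdot\gamma_e z$ and observe that the divisor $v\mid N$ gets sent to $\tfrac{N/e}{\gcd(v,N/e)}\cdot\gcd(v,e)$ or, more usefully, that the map $v\mapsto$ (new divisor) is a permutation of the divisors of $N$ that pairs $v$ with $e/\!\gcd(v,e)\cdot N/\!\bigl(e\,\gcd(N/e,v)\bigr)$-type expressions; the clean statement is that $v\mapsto \tfrac{e\cdot N}{v\cdot e^2/\gcd(v,e)^2}$... in any case, the key combinatorial fact to isolate is that $\{vE_2(vz):v\mid N\}$ is permuted (up to the cocycle error terms) by $\gamma_e$, with the sign $\mu$ tracking the reshuffle.

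The second step is the bookkeeping of Möbius signs. Because $N$ is square-free, the divisors $v\mid N$ are in bijection with subsets of $\{p_1,\dots,p_r\}$, and the action of $\gamma_e$ on divisors corresponds to ``symmetric difference with the subset corresponding to $e$''. Under this involution $v\mapsto v\ast e$ one has $\mu(v\ast e)=\mu(v)\mu(e)$ for square-free arguments, which is exactly the source of the factor $\mu(e)$ in the claimed transformation rule. So after applying \eqref{transf for E_2} term-by-term in the sum $\sum_{v\mid N}\mu(v)vE_2(vz)$, the main $(cz+d)^2$-type terms reassemble — after the permutation of summation index — into $\mu(e)(c\tfrac{N}{\sqrt e}z+d\sqrt e)^2$ times the original sum, and the normalization $(-1)^r/\varphi(N)$ is just carried along unchanged (note $\varphi(N)=\prod(p_i-1)$ is what makes the constant term work out, though since we only claim a transformation rule and not a $q$-expansion normalization here, the constant is immaterial to the proof).

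The third step, and the one I expect to be the main obstacle, is showing that all the cocycle error terms $\frac{6}{\pi i}C(Cz+D)$ coming from the various summands cancel. This is where the hypothesis $N\mid c$ (equivalently, that $\gamma_e$ lies in $\Gamma_0(N)^+$ and not merely in the larger group $\SL(2,\R)$) must be used decisively: each error term carries a factor proportional to $v$ times the lower-left entry of the transformed matrix, and these lower-left entries are all divisible by $N$ in a way that forces $\sum_{v\mid N}\mu(v)\cdot(\text{error}_v)=0$. I would verify this by writing the error contribution from the $v$-th term explicitly as a multiple of $\mu(v)\cdot v\cdot(\text{const depending on }c,d,N,v)$ and checking that the Möbius-weighted sum telescopes to zero; the cleanest route is probably to observe that $\sum_{v\mid N}\mu(v) v\cdot g(v)=0$ whenever $g$ is a suitable multiplicative-type function, or alternatively to note that $E_{2,N}$ is (up to scaling) a sum of the $\Gamma_0(p)$-forms $E_{2,p}$ from the previous subsection inclusion-exclusion'd over the primes dividing $N$, so that holomorphy and the near-invariance follow from the already-established weight-two invariance of each $E_{2,p}$ under $\Gamma_0(p)$ combined with the Atkin--Lehner action. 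If that inclusion-exclusion identity $E_{2,N}=\sum_{d\mid N}\mu(N/d)\cdots$ can be made to hold on the nose, the whole proposition collapses to one application of \eqref{transf for E_2} per prime plus sign-tracking, which I would present as the streamlined argument and relegate the direct matrix computation to a remark.
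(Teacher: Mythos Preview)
Your overall plan is the paper's: apply the transformation law \eqref{transf for E_2} term by term to $\sum_{v\mid N}\mu(v)vE_2(vz)$, identify the permutation of divisors induced by $\gamma_e$, track the M\"obius sign through that permutation, and verify that the cocycle error terms cancel. Two points where your sketch is vaguer or more complicated than it needs to be.

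First, the permutation of divisors is simply $v\mapsto \dfrac{ev}{(e,v)^2}$; your attempts to write it via $N/e$ and iterated gcd's are garbled. In subset language this is exactly the symmetric difference $v\mapsto v\ast e$ you mention, and the sign identity $\mu(v)=\mu(e)\,\mu\!\bigl(\tfrac{ev}{(e,v)^2}\bigr)$ is then immediate.

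Second, the error-term cancellation is \emph{not} the main obstacle; it is the easiest step. A direct computation from \eqref{transf for E_2} gives
\[
vE_2(v\,\gamma_e z)=\frac{ev}{(e,v)^2}\Bigl(c\tfrac{N}{\sqrt e}z+d\sqrt e\Bigr)^2 E_2\!\Bigl(\tfrac{ev}{(e,v)^2}z\Bigr)+\frac{6}{\pi i}\,cN\Bigl(c\tfrac{N}{e}z+d\Bigr),
\]
and the error term on the right is \emph{independent of $v$}. Hence the M\"obius-weighted sum of errors is just $\bigl(\sum_{v\mid N}\mu(v)\bigr)$ times a fixed quantity, and this vanishes because $N>1$. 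No telescoping, no appeal to the $E_{2,p}$ forms, and no special use of $N\mid c$ beyond what already went into the first computation.
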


\begin{proof}
Choose and fix any $e\mid N$. For any $v \mid N$, let $(e,v)$ denote the greatest common divisor
of $e$ and $v$. Then, using the transformation formula~\eqref{transf for E_2} for $E_2$, it is
easy to deduce that
$$
v E_2(v(\gamma_e z)) = \frac{ev}{(e,v)^2}(c\frac{N}{\sqrt{e}} z + d \sqrt{e})^2
E_2\left(\frac{ev}{(e,v)^2}z \right)+ \frac{6}{\pi i}cN \left(c\frac{N}{e}z + d \right).
$$
Since $N$ is square-free with $r$ prime factors, it is easy to see that
$$
\sum_{v \mid N} \mu(v) \frac{6}{\pi i}cN \left(c\frac{N}{e}z + d \right) =
\frac{6}{\pi i}cN \left(c\frac{N}{e}z + d \right) \sum_{j=1}^r \binom{r}{j} (-1)^j=0,
$$
hence
$$
\sum_{v \mid N} \mu(v) v E_2(v(\gamma_e z))= \sum_{v \mid N} \mu(v)\frac{ev}{(e,v)^2}
(c\frac{N}{\sqrt{e}} z + d \sqrt{e})^2 E_2\left(\frac{ev}{(e,v)^2}z \right).
$$
We claim that $\{ v : v \mid N\} = \{\frac{ev}{(e,v)^2} : v \mid N \}$, which is easily deduced
by induction in $r$. Furthermore, when $e$ has an even number of prime factors, the parity of the
number of factors of $\frac{ev}{(e,v)^2}$ remains the same as the parity of the number of factors
of $v$, while when $e$ has odd number of factors, the parity changes, meaning that
$\mu(v) = \mu(e) \mu(\frac{ev}{(e,v)^2})$. Therefore
$$
\sum_{v \mid N} \mu(v) v E_2(v(\gamma_e z))
= \mu(e) (c\frac{N}{\sqrt{e}} z + d \sqrt{e})^2 \sum_{v \mid N} \mu(v) v E_2(vz)
$$
and the proof is complete.
\end{proof}

\begin{proposition}
The smallest even integer $k_N$ such that there exists a weight $k_N$ cusp form $f_N$ vanishing
only at the cusp $i\infty$ is given by the formula
\begin{align}\label{k_N formula}
k_N=\lcm(4, 2^{r-1}\frac{24}{(24, \sigma(N))})
\end{align}
where $\lcm$ denotes the least common multiple and $(\cdot, \cdot)$ stands for the greatest
common divisor.
\end{proposition}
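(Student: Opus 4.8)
The plan is to prove the two bounds separately; the upper bound is essentially a restatement of a known fact. Indeed, by~\eqref{Kronecker_limit} the Kronecker limit function $\Delta_N=\big(\prod_{v\mid N}\eta(vz)\big)^{\ell_N}$, with $\ell_N=2^{1-r}\lcm\big(4,2^{r-1}\tfrac{24}{(24,\sigma(N))}\big)$, is a cusp form on $\overline{\Gamma_0(N)^+}$ of weight $2^{r-1}\ell_N$ vanishing at the cusp $i\infty$ only; substituting the value of $\ell_N$ gives $2^{r-1}\ell_N=\lcm\big(4,2^{r-1}\tfrac{24}{(24,\sigma(N))}\big)$, so the weight on the right of~\eqref{k_N formula} is realized and only the lower bound requires work.

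For the lower bound, let $f$ be a nonzero cusp form on $X_N$ of weight $2m$ vanishing only at $i\infty$. First I would note that $f^{k_N}$ and $\Delta_N^{2m}$ are holomorphic modular forms of the common weight $2mk_N$, so $\phi:=f^{k_N}/\Delta_N^{2m}$ is a meromorphic modular function on the genus zero surface $X_N\cong\p^1$; by the hypothesis on $f$ and the analogous property of $\Delta_N$, the divisor of $\phi$ is supported at the single point $i\infty$, and a rational function on $\p^1$ with divisor concentrated at one point is constant. Since $\h$ is simply connected and $\Delta_N$ has no zero on $\h$, extracting $k_N$-th roots then gives
$$
f(z)=c\,\Delta_N(z)^{2m/k_N}=c\Big(\prod_{v\mid N}\eta(vz)\Big)^{\alpha},\qquad \alpha:=\frac{2m\,\ell_N}{k_N}=\frac{2m}{2^{r-1}},
$$
for a nonzero constant $c$. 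Next I would read off three conditions on $\alpha$ from the fact that $f$ — hence also $g_\alpha(z):=\big(\prod_{v\mid N}\eta(vz)\big)^{\alpha}$ — is a holomorphic modular form on $\overline{\Gamma_0(N)^+}$ with trivial multiplier system. (a) From $g_\alpha(z)=q^{\alpha\sigma(N)/24}\big(\prod_{v\mid N}\prod_{n\ge1}(1-q^{vn})\big)^{\alpha}$ and $\prod_{v\mid N}\prod_{n\ge1}(1-q^{vn})=1-q+O(q^2)$ (the $q$-term coming from $v=n=1$ alone), a non-integral $\alpha$ would force unbounded denominators in the Fourier coefficients, which a holomorphic form on a congruence subgroup cannot have; hence $\alpha\in\Z_{>0}$. (b) Since $z\mapsto z+1$ lies in $\Gamma_0(N)^+$ with trivial automorphy factor and $\eta(z+1)=e^{\pi i/12}\eta(z)$, one has $g_\alpha(z+1)=e^{2\pi i\alpha\sigma(N)/24}g_\alpha(z)$, so $24\mid\alpha\sigma(N)$, i.e.\ $\tfrac{24}{(24,\sigma(N))}\mid\alpha$. (c) By the transformation-law argument used above for weight two applied to an order-two elliptic point of $X_N$ (which exists for each level by~\cite{Cum04}; alternatively one may use the Fricke involution $w_N\in\Gamma_0(N)^+$), a weight-$2m$ form with $m$ odd vanishes there, so $m$ is even, i.e.\ $4\mid 2m=2^{r-1}\alpha$. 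Finally, (a)--(c) are together equivalent to $\ell_N\mid\alpha$, because $\ell_N\mid\alpha\iff\lcm\big(4,2^{r-1}\tfrac{24}{(24,\sigma(N))}\big)\mid 2^{r-1}\alpha\iff\big(4\mid 2^{r-1}\alpha\text{ and }\tfrac{24}{(24,\sigma(N))}\mid\alpha\big)$. Since $\alpha\ge1$ is an integer, $\alpha\ge\ell_N$, whence $2m=2^{r-1}\alpha\ge 2^{r-1}\ell_N=k_N$; equality holds for $f=\Delta_N$, which proves~\eqref{k_N formula}.

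The main obstacle I foresee is condition (a): one must show that $1-q+O(q^2)\in\Z[[q]]$ has no formal $s$-th root with bounded denominators for any integer $s\ge2$, which can be done by computing, for a prime $\ell\mid s$, the $\ell$-adic valuation of the coefficient of $q^{s}$ (and then of $q^{s^2},\dots$) in the hypothetical root and showing it tends to $-\infty$. An alternative that stays closer to the preceding proofs, at the price of a finite case check, is to apply Lemma~\ref{prop sum over zeros} directly: formula~\eqref{zeros f-la} with all non-cusp terms zero gives $v_{i\infty}(f)=m\cdot\vol(X_N)/2\pi$, and one checks from~\eqref{volume of X_N} — or from $[\PSL(2,\Z):\overline{\Gamma_0(N)}]=\sigma(N)$ and the degree-$2^r$ covering $\overline{\Gamma_0(N)}\backslash\h\to X_N$ — that $\vol(X_N)/2\pi=\sigma(N)/(6\cdot 2^r)$; combining the divisibility $6\cdot 2^r\mid m\sigma(N)$ with $n_e\mid m$ at each elliptic point $e$ (forced as in (c)) again yields $k_N\mid 2m$, but now the reconciliation with the closed form relies on the elliptic data of the $43$ groups tabulated in~\cite{Cum04}.
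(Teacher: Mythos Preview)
Your argument is sound and reaches the same conclusion, but by a route quite different from the paper's. The paper works directly from Lemma~\ref{prop sum over zeros}: since $f_N$ vanishes only at the cusp, \eqref{zeros f-la} collapses to $\tfrac{k_N}{2}\cdot\tfrac{\vol(X_N)}{2\pi}=v_{i\infty}(f_N)\in\Z$, and together with the volume formula~\eqref{volume formula} this yields the divisibility by $2^{r-1}\tfrac{24}{(24,\sigma(N))}$; then $4\mid k_N$ comes from applying the transformation rule at the fixed point $i/\sqrt{N}$ of the Fricke involution, exactly your step~(c). No eta-products, no structural identification of $f$, no $q$-coefficient analysis. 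You instead first prove the pleasant extra fact that any such $f$ must equal $c\,(\prod_{v\mid N}\eta(vz))^{\alpha}$ --- the constant-ratio step is valid on any compact Riemann surface, not just $\p^1$, since a meromorphic function with divisor supported at a single point is constant --- and then extract constraints on the exponent. Your conditions (b) and (c) are precisely the paper's two ingredients in disguise; the new content is (a), which you rightly flag as the main technical hurdle. Establishing (a) requires knowing that holomorphic forms on $\Gamma_0(N)$ with rational $q$-coefficients have bounded denominators (classical, via an integral basis for $M_{2m}(\Gamma_0(N))$) together with the $p$-adic growth estimate you sketch --- correct, but markedly heavier than the paper's direct appeal to Lemma~\ref{prop sum over zeros}. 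What (a) buys is uniformity: the paper's passage from $24\cdot 2^{r-1}\mid k_N\sigma(N)$ to $2^{r-1}\tfrac{24}{(24,\sigma(N))}\mid k_N$ is not a purely formal implication when $v_2(\sigma(N))>3$, though it does hold for every genus-zero level since those all satisfy $r\le 3$, so the $2$-part is absorbed by the $\lcm$ with~$4$. Your closing ``alternative'' is essentially the paper's own proof.
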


\begin{proof}
From \cite{JST2}, one has that the volume of the surface $X_N$ is given by
\begin{align}\label{volume formula}
\vol(X_N)= \frac{\pi \sigma(N)}{6 \cdot2^{r-1}},
\end{align}
where $r$ is the number of (distinct) prime factors of $N$.
By combining~\eqref{volume formula} with \eqref{zeros f-la}, we have that
$$
k_N \cdot \frac{\sigma(N)}{24 \cdot2^{r-1}} = v_{i\infty}(f_N),
$$
hence $2^{r-1}\frac{24}{(24, \sigma(N))} \mid k_N$.

On the other hand, the cusp form $f_N$ does not vanish at order two elliptic points. As proven
above, every surface $X_N$ for a square-free $N$ has at least one order two elliptic point that
is a fixed point of the Atkin-Lehner involution $\tau_N: z \mapsto -1/(Nz)$). Since
$$
f_N(\tau_N(i/\sqrt{N}))=f_N(i/\sqrt{N}) = (i)^{k_N}f_N(i/\sqrt{N}),
$$
it folows that $4 \mid k_N$. The smallest $k_N$ divisible by both $4$ and
$2^{r-1}\frac{24}{(24, \sigma(N))}$ is given by~\eqref{k_N formula}.
Therefore, the proof is complete.
\end{proof}

The above proposition, together with Theorem 16 form \cite{JST2} yields the following corollary.

\begin{corollary}
Let $\ell_N= 2^{1-r} k_N$, where $k_N$ is given by~\eqref{k_N formula}. Then, the function
$$
\Delta_N(z): = \left( \prod_{v \mid N} \eta(v z)\right) ^{\ell_N}
$$
is the smallest weight cusp form on $X_N$ vanishing at the cusp only. Furthermore, the order of
vanishing of $\Delta_N$ at the cusp is given by
$$
v_{i\infty} (\Delta_N)=\frac{\sigma(N)\ell_N}{24} = k_N \cdot \frac{\sigma(N)}{24 \cdot2^{r-1}}
$$
\end{corollary}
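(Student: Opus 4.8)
The plan is to deduce the corollary by combining the preceding Proposition with Theorem~16 of \cite{JST2}; the only genuinely new work is a short computation of the order of vanishing of $\Delta_N$ at $i\infty$. First I would record the elementary arithmetic fact that $\ell_N := 2^{1-r}k_N$ is a positive integer with $2^{r-1}\ell_N = k_N$: by~\eqref{k_N formula} the quantity $2^{r-1}\frac{24}{(24,\sigma(N))}$ divides $k_N$, so $2^{r-1}\mid k_N$ and hence $\ell_N\in\Z_{>0}$. Moreover this $\ell_N$ is exactly the exponent appearing in the definition~\eqref{Kronecker_limit} of the Kronecker limit function, so Theorem~16 of \cite{JST2} applies without modification and tells us that $\Delta_N(z)=\bigl(\prod_{v\mid N}\eta(vz)\bigr)^{\ell_N}$ is a holomorphic modular form on $X_N$ of weight $2^{r-1}\ell_N=k_N$ which vanishes at the cusp $i\infty$ and at no other point of $X_N$.

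Next I would address minimality. By the preceding Proposition, $k_N$ is the smallest even weight admitting a cusp form on $X_N$ that vanishes only at $i\infty$, and $\Delta_N$ is precisely such a form of weight $k_N$ (it is a cusp form because its order of vanishing at $i\infty$ is strictly positive, as computed below). Hence $\Delta_N$ realizes the minimal weight, which is the first assertion. To justify the definite article I would add the standard uniqueness remark: if $f,g$ are two weight $k_N$ cusp forms on $X_N$ vanishing only at $i\infty$, then by the zero-counting identity~\eqref{zeros f-la} they vanish there to the same order, so $f/g$ descends to a holomorphic function on the compact surface $X_N$ and is therefore constant; thus $\Delta_N$ is determined up to a nonzero scalar.

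For the order of vanishing, set $q=e^{2\pi iz}$, which is a local parameter at the cusp since the stabilizer of $i\infty$ in $\overline{\Gamma_0(N)^+}$ is generated by $z\mapsto z+1$. From $\eta(vz)=q^{v/24}\prod_{n\ge1}(1-q^{vn})$ one gets
\[
\prod_{v\mid N}\eta(vz)=q^{\frac{1}{24}\sum_{v\mid N}v}\prod_{v\mid N}\prod_{n\ge1}(1-q^{vn})=q^{\sigma(N)/24}\bigl(1+O(q)\bigr),
\]
so $\Delta_N(z)=q^{\sigma(N)\ell_N/24}(1+O(q))$ and therefore $v_{i\infty}(\Delta_N)=\sigma(N)\ell_N/24$, which equals $k_N\sigma(N)/(24\cdot2^{r-1})$ upon substituting $\ell_N=2^{1-r}k_N$. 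As a consistency check I would note that, since $\Delta_N$ vanishes only at the cusp, applying Lemma~\ref{prop sum over zeros} with weight $2k=k_N$ together with the volume formula~\eqref{volume formula} gives the same value $v_{i\infty}(\Delta_N)=\tfrac{k_N}{2}\cdot\tfrac{\vol(X_N)}{2\pi}=k_N\sigma(N)/(24\cdot2^{r-1})$.

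There is no genuinely hard step here: the corollary is essentially bookkeeping on top of the Proposition and \cite{JST2}. The one point requiring care is confirming that the $\ell_N$ named in the corollary is literally the exponent for which Theorem~16 of \cite{JST2} is stated, so that modularity and the ``vanishes only at the cusp'' property may be cited rather than reproved; keeping straight the factor $2^{r-1}$ relating $\ell_N$ to $k_N$ is the only other place one could slip, and the $q$-expansion computation of $v_{i\infty}(\Delta_N)$ is routine and, as noted, independently confirmed by the zero-counting formula.
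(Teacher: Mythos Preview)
Your proposal is correct and follows exactly the route the paper indicates: the paper offers no explicit proof but simply states that the corollary follows from the preceding Proposition together with Theorem~16 of \cite{JST2}, and that is precisely what you do. Your added details---the explicit $q$-expansion computation of $v_{i\infty}(\Delta_N)$, the uniqueness-up-to-scalar remark, and the consistency check via Lemma~\ref{prop sum over zeros}---are all sound and merely flesh out what the paper leaves implicit (note that the order-of-vanishing identity already appears inside the proof of the preceding Proposition).
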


\noindent The next proposition determines the smallest weight $\widetilde{k}_N$ for square-free
$N$ such that the space $S_{\widetilde{k}_N}^{(N)}$ is not empty.

\begin{proposition}
Let $N=p_1 \cdots p_r$ be a square-free positive integer where $N>1$. Then, the smallest even
integer $\widetilde{k}_N$ such that there exist a weight $\widetilde{k}_N$ cusp form on a genus
zero surface $X_N$ is equal to $8$, for $N\in\{2,3\}$ and equal to $4$ for all other $N$.
\end{proposition}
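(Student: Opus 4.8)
The plan is to squeeze $\widetilde{k}_N$ between a lower bound obtained from the valence relation \eqref{zeros f-la} together with the volume formula \eqref{volume formula}, and an upper bound obtained by exhibiting explicit cusp forms built from holomorphic Eisenstein series and the character--twisted weight two function $E_{2,N}$ of Proposition~\ref{prop: weight two fom with character}. The bound $\widetilde{k}_N\geq 4$ is immediate: a cusp form is in particular a holomorphic modular form, and it was shown above that a genus zero $X_N$ carries no holomorphic modular form of weight two; since the weight is a positive even integer, $\widetilde{k}_N\geq 4$. Thus it suffices to decide, for each $N$, whether $S^{(N)}_4$, and then $S^{(N)}_6$, vanishes.

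For square-free $N>1$ with $N\notin\{2,3\}$ I would produce a single nonzero weight four cusp form, valid uniformly in $N$, namely
\[
g_N := E_4^{(N)}-E_{2,N}^{\,2}.
\]
Here the point is that one cannot form a weight four cusp form as a difference of weight four Eisenstein series alone, so $E_{2,N}$ must enter. Since $N$ is square-free, $\mu(e)=\pm1$ for every $e\mid N$, so by Proposition~\ref{prop: weight two fom with character} the square $E_{2,N}^{\,2}$ transforms under all of $\Gamma_0(N)^+$ with trivial character, and it is an honest holomorphic modular form of weight four on $X_N$ (it is holomorphic on $\h$, and its $q$-expansion, obtained from \eqref{def E_2}, involves only non-negative powers of $q$). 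Both $E_4^{(N)}$ and $E_{2,N}^{\,2}$ have constant term $1$ at the unique cusp --- for $E_4^{(N)}$ because $\sigma_2(N)^{-1}\sum_{v\mid N}v^2=1$, and for $E_{2,N}$ because $\sum_{v\mid N}\mu(v)v=\prod_{p\mid N}(1-p)=(-1)^r\varphi(N)$ --- so $g_N\in S^{(N)}_4$. It remains to check $g_N\not\equiv0$; reading off the coefficient of $q$ from \eqref{E_k^N q-expansion} and \eqref{def E_2}, it equals
\[
\frac{240}{\sigma_2(N)}+\frac{48\,(-1)^r}{\varphi(N)}.
\]
This is positive, hence nonzero, when $r$ is even (and then $N\geq 6$). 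When $r$ is odd it vanishes precisely when $\sigma_2(N)=5\,\varphi(N)$, i.e. when $\prod_{p\mid N}\frac{p^2+1}{p-1}=5$; since $\frac{p^2+1}{p-1}\geq 5$ for every prime $p$ with equality only for $p\in\{2,3\}$, a product of $r\geq 2$ such factors exceeds $5$, forcing $r=1$ and $N\in\{2,3\}$. Hence $S^{(N)}_4\neq0$ for all square-free $N>1$ with $X_N$ of genus zero and $N\notin\{2,3\}$, so $\widetilde{k}_N=4$ there. I expect this last number-theoretic point --- pinning down exactly the levels at which the $q$-coefficient of $g_N$ degenerates --- to be the one genuinely delicate step; the rest is bookkeeping with the $q$-expansions recorded above.

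Finally, for $N\in\{2,3\}$ I would rule out weights four and six from \eqref{zeros f-la}. By \eqref{volume formula}, $\vol(X_N)/(2\pi)=\sigma(N)/(12\cdot2^{r-1})$ equals $\tfrac14$ for $N=2$ and $\tfrac13$ for $N=3$, and a weight $2k$ cusp form $f$ satisfies $v_{i\infty}(f)\geq 1$, so \eqref{zeros f-la} forces $k\cdot\vol(X_N)/(2\pi)\geq 1$. This already fails for $N=2$ when $k\in\{2,3\}$, and for $N=3$ when $k=2$. For $N=3$ and $k=3$ the inequality reads $1\geq 1$, which would require $f$ to have a simple zero at $i\infty$ and no other zero; but $k$ is odd, so exactly as in the proofs above --- at the Atkin--Lehner fixed point $i/\sqrt3$, which is an order two elliptic point of $X_3$, one has $f=i^6f=-f$ --- the form $f$ must also vanish there, contributing at least $\tfrac12$ to the right-hand side of \eqref{zeros f-la} over and above the $1$ from the cusp, a contradiction. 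Hence $\widetilde{k}_N\geq 8$ for $N\in\{2,3\}$. On the other hand a weight eight cusp form does exist: $E_8^{(N)}-(E_4^{(N)})^2$ lies in $S^{(N)}_8$, and by \eqref{E_k^N q-expansion} its coefficient of $q$ equals $480\big(\sigma_4(N)^{-1}-\sigma_2(N)^{-1}\big)$, which is negative --- since $\sigma_4(N)>\sigma_2(N)$ whenever $N>1$ --- hence nonzero. (Alternatively, for $N=2$ one may just use that $\Delta_2$ has weight $k_2=8$ by \eqref{k_N formula}.) Therefore $\widetilde{k}_2=\widetilde{k}_3=8$, which completes the argument.
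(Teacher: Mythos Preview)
Your argument is correct and follows essentially the same route as the paper: for $N\notin\{2,3\}$ you exhibit the weight four cusp form $E_4^{(N)}-E_{2,N}^{\,2}$ and verify its $q$-coefficient is nonzero, and for $N\in\{2,3\}$ you combine the valence formula with the order-two elliptic vanishing to exclude weights four and six, then produce $E_8^{(N)}-(E_4^{(N)})^2$ (or $\Delta_2$) in weight eight. Your treatment of the $q$-coefficient is in fact slightly more careful than the paper's, since you use $\sigma_2(N)$ rather than $N^2+1$ and track the sign $(-1)^r$, making the nonvanishing argument uniform over composite square-free $N$ as well.
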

\begin{proof}

When $N=2$, it is immediate that $k=8$ is the smallest number such that
$k\cdot \frac{\vol(X_2)}{4\pi} \geq 1$.
Since $\Delta_2$ is weight $8$ cusp form, the assertion is proven when $N=2$.

When $N=3$, $k=6$ is the smallest number such that $k\cdot \frac{\vol(X_3)}{4\pi} \geq 1$.
However, if there exists a weight $6$ cusp form on $X_3$, this cusp form also vanishes at order
two elliptic point $e_2$ of $X_3$. Therefore, the right hand side of the formula~\eqref{zeros f-la}
is at least $3/2$, while the left hand side of the same formula
with $k=6$ is equal to $1$, which yields a contradiction. This shows that $8$ is the
smallest possible weight of cusp form on $X_3$. An example of weight eight cusp form on $X_3$
is $E_8^{(3)} - (E_4^{(3)})^2$, so the case when $N=3$ is complete.

When $N\geq 5$ we can construct the weight four cusp form on $X_N$, whether or not the genus is
zero, as follows. Let
$$
E_{4,N}(z):=\left( E_{2,N} (z)\right)^2.
$$
From Proposition~\ref{prop: weight two fom with character}, it is immediate that $E_{4,N}$
is weight four holomorphic form on $\overline{\Gamma_0(N)^+}$.
Recall that for a square-free $N$ with $r$ prime factors we have the formula
$$
\varphi(N)= (-1)^r \sum_{v \mid N} v\mu(v).
$$
The $q$-expansion~\eqref{def E_2} implies that $E_{2,N} (z)$ is normalized so that its
$q$-expansion has a leading coefficient equal to one. Therefore, the difference
$$
\widetilde{\Delta}_N(z):= E_4^{(N)}(z) - E_{4,N}(z)
$$
is a weight four cusp form. By computing the $q$-expansion of $E_{4,N}$, we deduce that
the term multiplying $q$ in the $q$-expansion of $E_{4,N}(z)$ is $\frac{48}{\varphi(N)}$,
while the term multiplying $q$ in the $q$-expansion of $E_4^{(N)}(z)$ is equal to
$\frac{240}{1+N^2}$. In other words, for square-free $N \notin\{2,3\}$, we have the expansion
$$
\widetilde{\Delta}_N(z) = 48\left( \frac{1}{\varphi(N)} - \frac{5}{N^2+1}\right)q + \ldots.
$$
The leading coefficient is non-zero whenever $N \geq 5$, hence $\widetilde{\Delta}_N(z)$ is
a weight four cusp form on $X_N$.
\end{proof}

\section{Expressing the Hauptmodul in terms of Eisenstein series} \label{sec:jst3}

In this section we discuss the main results of this article.

\begin{theorem}\label{thm:generators}
For any square-free $N \ge 1$ such that the surface $X_N$ has genus zero, there exist
effectively computable integers $M_N$ and $m_N$, and explicitly computable polynomials
$P_N (x_1, \ldots, x_{m_N-1})$ and $Q_N(x_1, \ldots, x_{m_N-1})$ in $m_N-1$ variables with integer
coefficients such that the Hauptmodul $j_N$ can be written as
$$
j_N(z)=\frac{P_N(E_4^{(N)}, \ldots, E_{2m_N}^{(N)})}{Q_N(E_4^{(N)}, \ldots, E_{2m_N}^{(N)})}
$$
and the Kronecker limit function can be written as
$\Delta_{N}^{M_{N}} = Q_N(E_4^{(N)}, \ldots, E_{2m_N}^{(N)})$.
\end{theorem}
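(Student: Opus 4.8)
The plan is to derive both formulas from the \jst3 of Section~\ref{subsec:jst}, using that its stopping criterion, once met, is self-certifying, together with the rigidity of weight-zero modular functions on a genus-zero surface.

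First I would make the reduction. For every integer $M\ge 1$ the forms $\Delta_N^{M}$ and $j_N\Delta_N^{M}$ are holomorphic modular forms of weight $Mk_N$ on $X_N$: the function $\Delta_N$ is non-vanishing on $\h$, so $\Delta_N^{M}$ has neither zeros nor poles in $\h$; $j_N$ is holomorphic on $\h$ with a single simple pole at $i\infty$; and $v_{i\infty}(\Delta_N)\ge 1$ gives $v_{i\infty}(j_N\Delta_N^{M})=Mv_{i\infty}(\Delta_N)-1\ge 0$. Hence it suffices to produce one value $M=M_N$ for which both of these specific forms lie in the subring $R^{(N)}$ of $\T^{(N)}$ generated by the holomorphic Eisenstein series $E_{2k}^{(N)}$, $k\ge 2$. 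Granting this, I would take $Q_N$ to be the polynomial expressing $\Delta_N^{M_N}$ in the $E_{2k}^{(N)}$, take $P_N$ to be the polynomial expressing $j_N\Delta_N^{M_N}$, and let $m_N$ be chosen so that $E_4^{(N)},\dots,E_{2m_N}^{(N)}$ are exactly the Eisenstein series occurring; then $\Delta_N^{M_N}=Q_N(E_4^{(N)},\dots,E_{2m_N}^{(N)})$ and $j_N=P_N/Q_N$, as claimed.

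Second, to produce $M_N$, $P_N$ and $Q_N$ I would run the \jst3, which for $M=1,2,\dots$ row-reduces the matrix over $\Q$ of the $q$-expansions, computed out to order $q^{\kappa_N}$ from \eqref{E_k^N q-expansion} and \eqref{Kronecker_limit}, of all weight-zero functions $R/\Delta_N^{M}$ with $R$ a weighted monomial of weight $Mk_N$ in the $E_{2k}^{(N)}$. As explained in Section~\ref{subsec:jst}, if the algorithm completes at some $M=M_N$ then, by the genus-zero stopping criterion, the reduced matrix contains a row whose $q$-expansion begins with $q^{-1}$---exhibiting $j_N$ as a polynomial in the $E_{2k}^{(N)}$ over $\Delta_N^{M_N}$---and a row whose $q$-expansion begins with $q^{0}$---exhibiting $\Delta_N^{M_N}$ itself as a polynomial in the $E_{2k}^{(N)}$. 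I would then note that these are genuine functional identities and not merely numerical coincidences: a weight-zero modular function on the genus-zero surface $X_N$ that is holomorphic on $\h$ and at $i\infty$ is constant, so two such functions whose $q$-expansions agree modulo $O(q)$ are equal, and agreement of the finitely many coefficients up to $q^{\kappa_N}$ therefore certifies equality. Finally, clearing denominators and invoking the integrality criterion of \cite{JST2}---integrality of a $q$-expansion out to $q^{\kappa_N}$ forces integrality of every coefficient---lets one normalize $P_N$ and $Q_N$ to have integer coefficients.

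The hard part will be termination: one must know that the \jst3 halts for each relevant $N$, equivalently that for some $M$ the ring $R^{(N)}$ contains the extremal cusp form $\Delta_N^{M}$ (which vanishes at $i\infty$ to the maximal order allowed for a weight-$Mk_N$ form) and the form $j_N\Delta_N^{M}$. Since $E_4^{(N)}$ and $E_6^{(N)}$ may have common zeros on $X_N$, there is no immediate low-weight construction of these forms, and I do not expect a uniform soft argument---indeed, halting of the algorithm is equivalent to the assertion that the rational function field of $X_N$ is generated by the holomorphic Eisenstein series. The way around this is that there are only finitely many square-free $N\ge 1$ for which $X_N$ has genus zero (the $44$ levels of \cite{Cum04}), so it is enough to verify halting level by level, which is exactly what the computation in Section~\ref{sec:examples} does; since the stopping criterion is self-certifying, each successful run is a rigorous proof for that $N$. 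The remaining ingredients---the absence of weight-two forms, the rigidity of weight-zero functions on a genus-zero surface, the holomorphy of $j_N\Delta_N^{M}$, and the integrality bookkeeping---are routine and already supplied by Sections~\ref{sec:background}--\ref{sec:modular forms} and \cite{JST2}.
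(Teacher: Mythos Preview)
Your proposal is correct and follows essentially the same approach as the paper: run the \jst3, rely on its (computationally verified) termination for each of the finitely many genus-zero levels to produce the two rows yielding $j_N\Delta_N^{M_N}$ and $\Delta_N^{M_N}$ as polynomials in the $E_{2k}^{(N)}$, and then clear denominators to obtain integer coefficients. You supply a point the paper leaves implicit---the rigidity argument that on a genus-zero one-cusp surface a weight-zero function holomorphic on $\h$ and at $i\infty$ is constant, so matching $q$-expansions through the constant term certifies exact equality---which is a welcome addition; note, however, that the integrality criterion of \cite{JST2} concerns $q$-expansion coefficients rather than the polynomial coefficients $C_b,D_b$, so for the integer-coefficient claim one simply multiplies through by the lcm of the denominators, as the paper does.
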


\begin{proof}
The result follows, because for each square-free level $N$, provided that $X_N$ has genus zero,
the \jst3 terminates in finite time. As stated, the computer code as well as the output is available on web site \cite{JST15URL}. In the space below, let us describe
in further detail the output of the computational algorithm. We remind the reader that the
{\bf JST2} and the \jst3 are described in section~\ref{subsec:jst}.

After Gauss elimination, one of the $q$-expansions has a pole of order $1$. This is the
Hauptmodul, see section~\ref{sec:examples} for explicit examples.
Keeping track of the linear algebra, we have an exact expression for the Hauptmodul as a
linear combination of holomorphic modular functions~\eqref{rational_function_b1} with rational
coefficients. In other words,
$$
j_N(z)= \frac{1}{(\Delta_N(z))^{M_N}} \sum_{b} C_b
\cdot \left( \prod_\nu\left(E_{m_\nu}^{(N)}(z)\right)^{b_\nu} \right),
$$
where the sum is taken over all $b= (b_1, \ldots)$ such that $\sum_{\nu} b_{\nu} m_{\nu} =k_N M_N$,
where $M_N$ is given in the right column of Table~\ref{tab:jst_2_vs_3} and $C_b \in \Q $.

There is also a $q$-expansion which is equal to the constant $1$. Again, by keeping
track of the linear algebra, we have an exact expression for the constant $1$ as
$$
1=\frac{1}{(\Delta_N(z))^{M_N}} \sum_{b} D_b
\cdot \left( \prod_\nu\left(E_{m_\nu}^{(N)}(z)\right)^{b_\nu} \right),
$$
where the sum is taken over the same set of $b$ as above and $D_B \in \Q$.

By the design of the \jst3, this exact expression can easily be solved for the $M_N$-th power of
the Kronecker limit function, showing that
$$
j_N(z)= \frac{\sum_{b} C_b
\cdot \left( \prod_\nu\left(E_{m_\nu}^{(N)}(z)\right)^{b_\nu} \right)}{
\sum_{b} D_b \cdot \left( \prod_\nu\left(E_{m_\nu}^{(N)}(z)\right)^{b_\nu} \right)}.
$$
After multiplication of both numerator and denominator with the least common multiple of the
denominators of the numbers $C_b$ and $D_b$, we deduce the statement of the theorem.
\end{proof}

\begin{remark}\rm
The polynomials $P_N$ and $Q_N$ appearing in Theorem~\ref{thm:generators} are weighted homogeneous
in the sense that there exists an integer $M_N$ such that the coefficient of the term
$(x_1)^{\alpha_1} \cdots (x_{m_N-1})^{\alpha_{m_N-1}}$ is non-zero only if
$4\alpha_1 + 6\alpha_2 + \ldots + 2m_N \alpha_{m_N-1} = k_N M_N$, where $k_N$ is the weight
of the Kronecker limit function $\Delta_N$.
\end{remark}

\begin{remark}\rm
Table~\ref{tab:jst_2_vs_3} provides the data regarding the performance of the {\bf JST2}
and {\bf JST3} algorithms. More precisely, the first columns of data in Table~\ref{tab:jst_2_vs_3}
lists, for each level $N$ provided that $X_{N}$ has genus zero, the weight $k_{N}$ of the Kronecker
limit function and the integer $\kappa_N$. To
recall, it is shown in \cite{JST2} if the $q$-expansion of a holomorphic modular form has integer
coefficients out to $q^{\kappa_N}$, then all further coefficients are also integral. The columns of
data in Table~\ref{tab:jst_2_vs_3} under the heading \jst2 lists the integer $M$ such that the
\jst2 stops, together with the $q$-expansions which are used in the Gauss elimination algorithm
as well as the order of the largest pole at $i\infty$ amongst the rational functions considered.
The columns of data in Table~\ref{tab:jst_2_vs_3} under the heading \jst3 present
similar information.
\end{remark}

\begin{table}
\caption{\label{tab:jst_2_vs_3}Performance of the {\bf JST2} and the \jst3.
For all genus zero groups $\overline{\Gamma_0(N)^+}$ we list the level $N$, the weight $k_{N}$ of the Kronecker
limit function, the value of $\kappa_N$ in the proof of integrality \cite{JST2} (left);
the level $N$, the number of iterations $M$, the number of equations,
and the largest order of pole for the \jst2 (middle) and similar for the \jst3 (right).}
\begin{minipage}{.2\textwidth}
\begin{center}
\mbox{} \\ \mbox{} %
\end{center}
$$
\begin{array}{ccccccc}
N && k_{N} & \kappa_N \\ \hline
1 && 12 & 19 \\
2 && 8 & 47 \\
3 && 12 & 48 \\
5 && 4 & 19 \\
6 && 4 & 60 \\
7 && 12 & 19 \\
10 && 8 & 75 \\
11 && 4 & 19 \\
13 && 12 & 19 \\
14 && 4 & 47 \\
15 && 4 & 96 \\
17 && 4 & 19 \\
19 && 12 & 19 \\
21 && 12 & 53 \\
22 && 4 & 47 \\
23 && 4 & 19 \\
26 && 8 & 47 \\
29 && 4 & 19 \\
30 && 4 & 127 \\
31 && 12 & 19 \\
33 && 4 & 48 \\
34 && 8 & 47 \\
35 && 4 & 19 \\
38 && 4 & 47 \\
39 && 12 & 48 \\
41 && 4 & 19 \\
42 && 4 & 108 \\
46 && 4 & 47 \\
47 && 4 & 19 \\
51 && 4 & 48 \\
55 && 4 & 19 \\
59 && 4 & 19 \\
62 && 4 & 47 \\
66 && 4 & 60 \\
69 && 4 & 48 \\
70 && 4 & 181 \\
71 && 4 & 19 \\
78 && 4 & 81 \\
87 && 4 & 48 \\
94 && 4 & 47 \\
95 && 4 & 19 \\
105 && 4 & 181 \\
110 && 4 & 89 \\
119 && 4 & 19 \\
\end{array}
$$
\end{minipage}
\quad
\begin{minipage}{.3\textwidth}
\begin{center}
\jst2
\end{center}
$$
\begin{array}{ccccccc}
N && M & \#\{eqs\} & \text{pole} \\ \hline
1 && 1 & 5 & 1 \\
2 && 1 & 3 & 1 \\
3 && 1 & 5 & 2 \\
5 && 1 & 2 & 1 \\
6 && 1 & 2 & 1 \\
7 && 1 & 5 & 4 \\
10 && 2 & 10 & 6 \\
11 && 3 & 8 & 6 \\
13 && 2 & 26 & 14 \\
14 && 3 & 8 & 6 \\
15 && 3 & 8 & 6 \\
17 && 4 & 15 & 12 \\
19 && 3 & 114 & 30 \\
21 && 2 & 26 & 16 \\
22 && 4 & 15 & 12 \\
23 && 5 & 27 & 20 \\
26 && 3 & 31 & 21 \\
29 && 6 & 48 & 30 \\
30 && 4 & 15 & 12 \\
31 && 4 & 434 & 64 \\
33 && 5 & 27 & 20 \\
34 && 3 & 31 & 27 \\
35 && 5 & 27 & 20 \\
38 && 5 & 27 & 25 \\
39 && 3 & 114 & 42 \\
41 && 7 & 82 & 49 \\
42 && 5 & 27 & 20 \\
46 && 6 & 48 & 36 \\
47 && 8 & 137 & 64 \\
51 && 6 & 48 & 36 \\
55 && 6 & 48 & 36 \\
59 && 9 & 225 & 90 \\
62 && 7 & 82 & 56 \\
66 && 6 & 48 & 36 \\
69 && 7 & 82 & 56 \\
70 && 6 & 48 & 36 \\
71 && 10 & 362 & 120 \\
78 && 6 & 48 & 42 \\
87 && 7 & 82 & 70 \\
94 && 8 & 137 & 96 \\
95 && 7 & 82 & 70 \\
105 && 7 & 82 & 56 \\
110 && 7 & 82 & 63 \\
119 && 8 & 137 & 96 \\
\end{array}
$$
\end{minipage}
\quad
\begin{minipage}{.3\textwidth}
\begin{center}
\jst3
\end{center}
$$
\begin{array}{ccccccc}
N && M_N & \#\{eqs\} & \text{pole} \\ \hline
1 && 1 & 4 & 1 \\
2 && 1 & 2 & 1 \\
3 && 1 & 4 & 2 \\
5 && 3 & 4 & 3 \\
6 && 3 & 4 & 3 \\
7 && 2 & 21 & 8 \\
10 && 2 & 7 & 6 \\
11 && 9 & 88 & 18 \\
13 && 3 & 88 & 21 \\
14 && 6 & 21 & 12 \\
15 && 5 & 12 & 10 \\
17 && 9 & 88 & 27 \\
19 && 4 & 320 & 40 \\
21 && 2 & 21 & 16 \\
22 && 6 & 21 & 18 \\
23 && 15 & 1039 & 60 \\
26 && 4 & 55 & 28 \\
29 && 15 & 1039 & 75 \\
30 && 6 & 21 & 18 \\
31 && 5 & 1039 & 80 \\
33 && 8 & 55 & 32 \\
34 && 4 & 55 & 36 \\
35 && 7 & 34 & 28 \\
38 && 10 & 137 & 50 \\
39 && 3 & 88 & 42 \\
41 && 21 & 8591 & 147 \\
42 && 7 & 34 & 28 \\
46 && 14 & 708 & 84 \\
47 && 27 & 56224 & 216 \\
51 && 11 & 210 & 66 \\
55 && 8 & 55 & 48 \\
59 && 33 & 310962 & 330 \\
62 && 18 & 3094 & 144 \\
66 && 8 & 55 & 48 \\
69 && 14 & 708 & 112 \\
70 && 8 & 55 & 48 \\
71 && 39 & 1512301 & 468 \\
78 && 9 & 88 & 63 \\
87 && 17 & 2167 & 170 \\
94 && 26 & 41646 & 312 \\
95 && 11 & 210 & 110 \\
105 && 9 & 88 & 72 \\
110 && 9 & 88 & 81 \\
119 && 10 & 137 & 120 \\
\end{array}
$$
\end{minipage}
\vspace*{4cm} %
\end{table}

\begin{remark}\rm
Table~\ref{E generators} provides a list of the holomorphic Eisenstein series
$E_{m_{\nu}}^{(N)}$ which appear in the expression for the Hauptmodul $j_N$ cited in
Theorem~\ref{thm:generators}. For all levels, the highest weight Eisenstein series has
weight $26$.
\end{remark}

\begin{table}
\caption{\label{E generators}Finite sets of Eisenstein series which include the generators of the
holomorphic Eisenstein series on groups $\overline{\Gamma_0(N)^+}$ of genus zero. Listed are level and finite set.}
\small
$$
\begin{array}{ccc}
N && \text{finite set} \\[1ex]
1 && E_{4}^{(1)},E_{6}^{(1)} \\
2 && {E_{4}^{(2)}},{E_{6}^{(2)}},{E_{8}^{(2)}} \\
3 && {E_{4}^{(3)}},{E_{6}^{(3)}},{E_{12}^{(3)}} \\
5 && {E_{4}^{(5)}},{E_{6}^{(5)}},{E_{8}^{(5)}},{E_{12}^{(5)}} \\
6 && {E_{4}^{(6)}},{E_{6}^{(6)}},{E_{8}^{(6)}},{E_{12}^{(6)}} \\
7 && {E_{4}^{(7)}},{E_{6}^{(7)}},{E_{8}^{(7)}},{E_{10}^{(7)}},{E_{12}^{(7)}} \\
10 && {E_{4}^{(10)}},{E_{6}^{(10)}},{E_{8}^{(10)}},{E_{10}^{(10)}},{E_{12}^{(10)}},{E_{16}^{(10)}} \\
11 && {E_{4}^{(11)}},{E_{6}^{(11)}},{E_{8}^{(11)}},{E_{10}^{(11)}},{E_{12}^{(11)}} \\
13 && {E_{4}^{(13)}},{E_{6}^{(13)}},{E_{8}^{(13)}},{E_{10}^{(13)}},{E_{12}^{(13)}} \\
14 && {E_{4}^{(14)}},{E_{6}^{(14)}},{E_{8}^{(14)}},{E_{10}^{(14)}},{E_{12}^{(14)}} \\
15 && {E_{4}^{(15)}},{E_{6}^{(15)}},{E_{8}^{(15)}},{E_{10}^{(15)}},{E_{12}^{(15)}},{E_{14}^{(15)}},{E_{16}^{(15)}} \\
17 && {E_{4}^{(17)}},{E_{6}^{(17)}},{E_{8}^{(17)}},{E_{10}^{(17)}},{E_{12}^{(17)}} \\
19 && {E_{4}^{(19)}},{E_{6}^{(19)}},{E_{8}^{(19)}},{E_{10}^{(19)}},{E_{12}^{(19)}} \\
21 && {E_{4}^{(21)}},{E_{6}^{(21)}},{E_{8}^{(21)}},{E_{10}^{(21)}},{E_{12}^{(21)}},{E_{14}^{(21)}},{E_{16}^{(21)}} \\
22 && {E_{4}^{(22)}},{E_{6}^{(22)}},{E_{8}^{(22)}},{E_{10}^{(22)}},{E_{12}^{(22)}},{E_{14}^{(22)}},{E_{16}^{(22)}},{E_{18}^{(22)}} \\
23 && {E_{4}^{(23)}},{E_{6}^{(23)}},{E_{8}^{(23)}},{E_{10}^{(23)}},{E_{12}^{(23)}} \\
26 && {E_{4}^{(26)}},{E_{6}^{(26)}},{E_{8}^{(26)}},{E_{10}^{(26)}},{E_{12}^{(26)}},{E_{14}^{(26)}} \\
29 && {E_{4}^{(29)}},{E_{6}^{(29)}},{E_{8}^{(29)}},{E_{10}^{(29)}},{E_{12}^{(29)}} \\
30 && {E_{4}^{(30)}},{E_{6}^{(30)}},{E_{8}^{(30)}},{E_{10}^{(30)}},{E_{12}^{(30)}},{E_{14}^{(30)}},{E_{16}^{(30)}},{E_{18}^{(30)}} \\
31 && {E_{4}^{(31)}},{E_{6}^{(31)}},{E_{8}^{(31)}},{E_{10}^{(31)}},{E_{12}^{(31)}} \\
33 && {E_{4}^{(33)}},{E_{6}^{(33)}},{E_{8}^{(33)}},{E_{10}^{(33)}},{E_{12}^{(33)}},{E_{14}^{(33)}} \\
34 && {E_{4}^{(34)}},{E_{6}^{(34)}},{E_{8}^{(34)}},{E_{10}^{(34)}},{E_{12}^{(34)}},{E_{14}^{(34)}},{E_{16}^{(34)}} \\
35 && {E_{4}^{(35)}},{E_{6}^{(35)}},{E_{8}^{(35)}},{E_{10}^{(35)}},{E_{12}^{(35)}},{E_{14}^{(35)}},{E_{16}^{(35)}},{E_{18}^{(35)}} \\
38 && {E_{4}^{(38)}},{E_{6}^{(38)}},{E_{8}^{(38)}},{E_{10}^{(38)}},{E_{12}^{(38)}},{E_{14}^{(38)}} \\
39 && {E_{4}^{(39)}},{E_{6}^{(39)}},{E_{8}^{(39)}},{E_{10}^{(39)}},{E_{12}^{(39)}},{E_{14}^{(39)}} \\
41 && {E_{4}^{(41)}},{E_{6}^{(41)}},{E_{8}^{(41)}},{E_{10}^{(41)}},{E_{12}^{(41)}} \\
42 && {E_{4}^{(42)}},{E_{6}^{(42)}},{E_{8}^{(42)}},{E_{10}^{(42)}},{E_{12}^{(42)}},{E_{14}^{(42)}},{E_{16}^{(42)}},{E_{18}^{(42)}} \\
46 && {E_{4}^{(46)}},{E_{6}^{(46)}},{E_{8}^{(46)}},{E_{10}^{(46)}},{E_{12}^{(46)}} \\
47 && {E_{4}^{(47)}},{E_{6}^{(47)}},{E_{8}^{(47)}},{E_{10}^{(47)}},{E_{12}^{(47)}} \\
51 && {E_{4}^{(51)}},{E_{6}^{(51)}},{E_{8}^{(51)}},{E_{10}^{(51)}},{E_{12}^{(51)}},{E_{14}^{(51)}} \\
55 && {E_{4}^{(55)}},{E_{6}^{(55)}},{E_{8}^{(55)}},{E_{10}^{(55)}},{E_{12}^{(55)}},{E_{14}^{(55)}},{E_{16}^{(55)}},{E_{18}^{(55)}},{E_{20}^{(55)}},{E_{22}^{(55)}} \\
59 && {E_{4}^{(59)}},{E_{6}^{(59)}},{E_{8}^{(59)}},{E_{10}^{(59)}},{E_{12}^{(59)}} \\
62 && {E_{4}^{(62)}},{E_{6}^{(62)}},{E_{8}^{(62)}},{E_{10}^{(62)}},{E_{12}^{(62)}} \\
66 && {E_{4}^{(66)}},{E_{6}^{(66)}},{E_{8}^{(66)}},{E_{10}^{(66)}},{E_{12}^{(66)}},{E_{14}^{(66)}},{E_{16}^{(66)}},{E_{18}^{(66)}},{E_{20}^{(66)}},{E_{22}^{(66)}} \\
69 && {E_{4}^{(69)}},{E_{6}^{(69)}},{E_{8}^{(69)}},{E_{10}^{(69)}},{E_{12}^{(69)}} \\
70 && {E_{4}^{(70)}},{E_{6}^{(70)}},{E_{8}^{(70)}},{E_{10}^{(70)}},{E_{12}^{(70)}},{E_{14}^{(70)}},{E_{16}^{(70)}},{E_{18}^{(70)}},{E_{20}^{(70)}},{E_{22}^{(70)}} \\
71 && {E_{4}^{(71)}},{E_{6}^{(71)}},{E_{8}^{(71)}},{E_{10}^{(71)}},{E_{12}^{(71)}} \\
78 && {E_{4}^{(78)}},{E_{6}^{(78)}},{E_{8}^{(78)}},{E_{10}^{(78)}},{E_{12}^{(78)}},{E_{14}^{(78)}},{E_{16}^{(78)}},{E_{18}^{(78)}} \\
87 && {E_{4}^{(87)}},{E_{6}^{(87)}},{E_{8}^{(87)}},{E_{10}^{(87)}},{E_{12}^{(87)}} \\
94 && {E_{4}^{(94)}},{E_{6}^{(94)}},{E_{8}^{(94)}},{E_{10}^{(94)}},{E_{12}^{(94)}} \\
95 && {E_{4}^{(95)}},{E_{6}^{(95)}},{E_{8}^{(95)}},{E_{10}^{(95)}},{E_{12}^{(95)}},{E_{14}^{(95)}},{E_{16}^{(95)}} \\
105 && {E_{4}^{(105)}},{E_{6}^{(105)}},{E_{8}^{(105)}},{E_{10}^{(105)}},{E_{12}^{(105)}},{E_{14}^{(105)}},{E_{16}^{(105)}},{E_{18}^{(105)}},{E_{20}^{(105)}} \\
110 && {E_{4}^{(110)}},{E_{6}^{(110)}},{E_{8}^{(110)}},{E_{10}^{(110)}},{E_{12}^{(110)}},{E_{14}^{(110)}},{E_{16}^{(110)}},{E_{18}^{(110)}},{E_{20}^{(110)}},{E_{22}^{(110)}},{E_{24}^{(110)}},{E_{26}^{(110)}} \\
119 && {E_{4}^{(119)}},{E_{6}^{(119)}},{E_{8}^{(119)}},{E_{10}^{(119)}},{E_{12}^{(119)}},{E_{14}^{(119)}},{E_{16}^{(119)}},{E_{18}^{(119)}},{E_{20}^{(119)}},{E_{22}^{(119)}},{E_{24}^{(119)}} \\
\end{array}
$$
\normalsize
\vspace*{4cm} %
\end{table}

\begin{remark}\rm
Expressions that are based on the track record of the linear algebra depend on how the base change
is made through Gauss elimination. In particular, there may be linearly dependent functions, some
of which survive the Gauss elimination while others get annihilated. We sought to express our
results in terms of Eisenstein series whose weights are as small as possible. In other words, in
the Gauss elimination we prioritized the holomorphic modular functions accordingly.

By expressing the Hauptmodul in terms of holomorphic Eisenstein series of smallest possible
weights, we were able to determine a finite list of holomorphic Eisenstein series which
generates the rational function field. Let $G$ denote any modular form of weight $k$ and
consider the function
$$
F(z)=G(z)\left(E_6^{(N)}(z)\right)^{n_6}\left(E_4^{(N)}(z)\right)^{n_4}
\Big/\big(\Delta_N(z)\big)^{nM_N},
$$
with non-negative integers $n_6$, $n_4$, and $n$ such that $k+6n_6+4n_4=k_NnM_N$. There is a
rational function $R$ in one variable such that $F(z) = R(j_{N}(z))$. Therefore, we conclude
that $G$ can be written as a rational function in terms of the holomorphic Eisenstein series that
are listed in Table~\ref{E generators}.
\end{remark}

\begin{remark}\rm
We note that the sets in Table~\ref{E generators} are not necessarily minimal sets of generators.
A specific example in the case $N=2$ is discussed below. As stated in the introduction, our
goal was to determine a set of generators of the function field. Indeed, it seems to be a
difficult problem to determine the structure of the ring of modular forms in any setting when
$M_N> 1$, meaning when there is an expression for the $M_N$-th power of the Kronecker limit
function in terms of holomorphic Eisenstein series yet no apparent expression for any smaller
power of the Kronecker limit function.
\end{remark}

\section{Examples} \label{sec:examples}

In this section we will present a number of specific formulae for various levels. It seems as
if each level has its own idiosyncratic characteristics, so we choose various examples which,
in our opinion, depict some of the most comprehensible and quantifiable nuances.

\subsection{$\mathbf{N=2}$}

We will cite specific results here, referring the reader to the article \cite{MNS} for
additional information and proofs. The Kronecker limit function can be written as
\begin{align}\label{Delta_2 formula}
\Delta_2(z) = \tfrac{ 17 }{ 1152 } \big(E^{(2)}_{4}(z)\big)^2 - \tfrac{ 17 }{ 1152 } E^{(2)}_{8}(z).
\end{align}
In addition, one has that
$$
j_2(z)\Delta_2(z) = - \tfrac{ 77 }{ 144 } \big(E^{(2)}_{4}(z)\big)^2 + \tfrac{ 221 }{ 144 } E^{(2)}_{8}(z).
$$
By arguing as in \cite{Serre73}, one can prove a dimension formula for the space of automorphic
forms of weight $2k$, namely that
\begin{align}\label{dimension f-la N=2}
\dim \T_{2k}^{(2)} = \begin{cases}
\lfloor \frac{k}{4} \rfloor, & \text{if $k$ is congruent to $1$ modulo $4$, $k\geq 0$} \\
\lfloor \frac{k}{4} \rfloor +1, & \text{if $k$ is not congruent to $1$ modulo $4$, $k\geq 0$}.
\end{cases}
\end{align}
The space $\T_{2k}^{(2)}$ is generated by the set of monomials
$(E_4^{(2)}(z))^{l}(E_6^{(2)}(z))^{m}(E_8^{(2)}(z))^{n} $, where $l,m,n$ are non-negative integers
such that $4l + 6m + 8n =2k$. The dimension formula~\eqref{dimension f-la N=2} yields some
interesting number-theoretical formulae. For example, since $\dim \T_{10}^{(2)}=1$, we see
that $E_{10}^{(2)}(z)=E_{6}^{(2)}(z)E_{4}^{(2)}(z)$. By equating the
$q$-expansions~\eqref{E_k^N q-expansion} for $k \in \{2,3,5\}$, one obtains the following
summation formula for the generalized sum of divisors:
$$
A_{9}^{(2)}(n) = 336 \sum_{j=1}^{n-1} A_3^{(2)}(j)A_5^{(2)}(n-j) + 7 A_5^{(2)}(n) - 6 A_3^{(2)}(n),
$$
where $A_{2k-1}^{(2)}(n) = \sigma_{2k-1}(n) + 2^k \delta(n)\sigma_{2k-1} (n/2)$, for $k=1,2,\ldots$
and $\delta(n)= 1$ for even positive integers $n$ and $\delta(n) = 0$, otherwise.

Analogously, using formula~\eqref{Delta_2 formula}, the $q$-expansion~\eqref{E_k^N q-expansion}
and the $q$-expansion for the delta function, $\Delta(z) = \sum_{n=1}^{\infty} \tau(n) q^n$,
where $\tau(n)$ is the Ramanujan function, one obtains relations involving $\tau$, $\sigma_3$
and $\sigma_7$.

\subsection{$\mathbf{N=3}$}

As with the case $N=2$, we refer the reader to \cite{MNS} for additional information and proofs.
The Kronecker limit function vanishes to order $2$ at $i\infty$ and has weight $12$. The
smallest weight cusp form has weight $8$, but it vanishes to order $1$ at $i\infty$, and,
consequently, it vanishes elsewhere. The Kronecker limit function can be written as
\begin{equation}\label{jd_n3}
j_3(z)\Delta_3(z) = \tfrac{ 541 }{ 1728 } \big(E^{(3)}_{4}(z)\big)^3 + \tfrac{ 14461 }{ 24300 } \big(E^{(3)}_{6}(z)\big)^2 - \tfrac{ 353101 }{ 388800 } E^{(3)}_{12}(z)
\end{equation}
and the Hauptmodul is given by
\begin{equation}\label{d_n3}
\Delta_3(z) = - \tfrac{ 25 }{ 3456 } \big(E^{(3)}_{4}(z)\big)^3 - \tfrac{ 1049 }{ 72900 } \big(E^{(3)}_{6}(z)\big)^2 + \tfrac{ 50443 }{ 2332800 } E^{(3)}_{12}(z).
\end{equation}
The dimension formula for the space of automorphic forms of weight $2k$ is
$$
\dim \T_{2k}^{(3)} = \begin{cases}
\lfloor \frac{k}{3} \rfloor, & \text{if $k$ is congruent to $1$ or $3$ modulo $6$, $k\geq 0$} \\
\lfloor \frac{k}{3} \rfloor +1, & \text{if $k$ is not congruent to $1$ or $3$ modulo $6$, $k\geq 0$}.
\end{cases}
$$

We note that the forms $E_{8}^{(3)}(z) - (E_{4}^{(3)}(z))^{2}$ and
$E_{10}^{(3)}(z) - E_{4}^{(3)}(z)E_{6}^{(3)}(z)$ are cusp forms which vanish
at elliptic points on $X_{3}$; see Appendix B of \cite{MNS}. In other words,
there are cusp forms of weight smaller than the weight of
the Kronecker limit function, but these forms necessarily vanish at some point
in the interior of $X_{3}$, whereas the Kronecker limit function
vanishes at $i\infty$ only.

Finally, let us explain why $E_{8}^{(3)}$ does not appear in Table 2.  The information
in Appendix B of \cite{MNS} describes the zeros of small weight holomorphic forms.
In particular, we conclude from the information provided that
$$
\frac{E_{8}^{(3)}(z)}{E_{8}^{(3)}(z) - (E_{4}^{(3)}(z))^{2}} = c_{1}j_{3}(z) + c_{2}
$$
for some explicitly computable constants $c_{1}$ and $c_{2}$.  From this, we get that
\begin{equation}\label{e8_n3}
E_{8}^{(3)}(z) = (E_{4}^{(3)}(z))^{2} \frac{c_{1}j_{3}(z) + c_{2}}{c_{1}j_{3}(z) + c_{2}-1}.
\end{equation}
When combining (\ref{jd_n3}), (\ref{d_n3}) and (\ref{e8_n3}), we get a formula which expresses
$E_{8}^{(3)}$ as a rational function involving $E_{4}^{(3)}$, $E_{6}^{(3)}$ and $E_{12}^{(3)}$, as
asserted by Table 2.

\subsection{$\mathbf{N=5}$}

In the case $N=5$, the surface $X_5$ has genus zero, three order two elliptic elements
$e_1=i/\sqrt{5}$, $e_2=2/5 + i/5$, $e_3=1/2 + i/(2\sqrt{5})$, and one cusp, hence
$\vol_{\text{hyp}}(X_5)=\pi$. Its Kronecker limit
function has weight four, which is minimal, and the function vanishes at $i\infty$ to
order one, which is also minimal.
As a result, we have that the mapping $f \mapsto \Delta_5 f$ is an isometry between
the spaces $\T_{2k-4}^{(5)}$ and $S_{2k}^{(5)}$; therefore, we arrive at the dimension formula
$$
\dim \T_{2k}^{(2)} = \begin{cases}
\lfloor \frac{k}{2} \rfloor, & \text{if $k$ is congruent to $1$ modulo $2$, $k\geq 0$} \\
\lfloor \frac{k}{2} \rfloor +1, & \text{if $k$ is not congruent to $0$ modulo $2$, $k\geq 0$}.
\end{cases}
$$
The space $\T_{2k}^{(5)}$ is generated by the set of monomials
$(E_4^{(5)}(z))^l (\Delta_5(z))^m (E_6^{(5)}(z))^n$, where $l,m,n$ are non-negative integers
such that $4l+4m+6n=2k$. From the output of the \jst2, we have that
$$
j_5(z)\Delta_5(z) = E^{(5)}_{4}(z) - \tfrac{ 172 }{ 13 } \Delta_5(z).
$$
The analysis of $\Delta_{5}^{3}$ differs between the {\bf JST2} and {\bf JST3} algorithms.
From {\bf JST2}, we have that $\Delta_{5}^{3}$ is a rational function in the holomorphic
Eisenstein series of weights four, six, eight and twelve. From {\bf JST3}, we have that
$\Delta_{5}^{3}$ is a \textit{polynomial} in the holomorphic Eisenstein series of weights
four, six, eight and twelve. Namely, from the output of the \jst3, we have that
$$
j_5(z)\big(\Delta_5(z)\big)^3 = \tfrac{ 10330419229 }{ 11016000000 } \big(E^{(5)}_{4}(z)\big)^3 + \tfrac{ 36659 }{ 2448000 } \big(E^{(5)}_{6}(z)\big)^2 - \tfrac{ 28493266087 }{ 11016000000 } E^{(5)}_{8}(z) E^{(5)}_{4}(z) + \tfrac{ 2999646893 }{ 1836000000 } E^{(5)}_{12}(z)
$$
and
$$
\big(\Delta_5(z)\big)^3 = - \tfrac{ 9383387 }{ 162000000 } \big(E^{(5)}_{4}(z)\big)^3 - \tfrac{ 13 }{ 9000 } \big(E^{(5)}_{6}(z)\big)^2 + \tfrac{ 3226717 }{ 20250000 } E^{(5)}_{8}(z) E^{(5)}_{4}(z) - \tfrac{ 5398783 }{ 54000000 } E^{(5)}_{12}(z).
$$

\subsection{$\mathbf{N=6}$}

Topologically, $X_{5}$ and $X_{6}$ are identical, with the same number of cusps, elliptic
points of order two, and consequently, the same hyperbolic volume. The {\bf JST2} and
{\bf JST3} algorithms performed similarly in both cases, as one can see from
Table~\ref{tab:jst_2_vs_3} and Table~\ref{E generators}.
All comments above regarding the holomorphic function theory for
$X_{5}$ hold for $X_{6}$. However, as show in \cite{JST12}, the
analytic function theory of $X_{5}$ and $X_{6}$ are different. Specifically, the counting
functions for the analytic Maass forms, when ordered by their
Laplacian eigenvalues, are shown to be equal in their lead term but unequal in lower order terms.

\subsection{$\mathbf{N=17}$}

As we stated in the introduction, as $N$ becomes larger, the formulae become massive.
Our last example for $N=17$.
The Kronecker limit function has weight four and vanishes at $i\infty$ to order four.
From the \jst3, we have the following formulae:
\begin{align*}
j_{17}&(z)\big(\Delta_{17}(z)\big)^9 = \tfrac{ 81682801889356820001790224970058471917613108127362192461613220533 }{ 3269846855773492420944242299705431901325975126578932604974661632 } \big(E^{(17)}_{4}(z)\big)^9\\&
- \tfrac{ 57998022455299820152689336251300228068357304045275286805301 }{ 1197457521190948626327504142387996791894290229520024731648 } \big(E^{(17)}_{6}(z)\big)^2 \big(E^{(17)}_{4}(z)\big)^6\\&
+ \tfrac{ 40497436515338798408532045523225489025965457561330556316291 }{ 1852774239194857326466652706713276353684752025138495488000 } \big(E^{(17)}_{6}(z)\big)^4 \big(E^{(17)}_{4}(z)\big)^3\\&
- \tfrac{ 3758480257690225061233693208729793594924453574315163 }{ 550341367907988517797569501748755788899740024832000 } \big(E^{(17)}_{6}(z)\big)^6\\&
- \tfrac{ 19414695740146736017085565287911573267947533788487530546931336997391 }{ 235020242758719767755367415291327917907804462222860780982553804800 } E^{(17)}_{8}(z) \big(E^{(17)}_{4}(z)\big)^7\\&
+ \tfrac{ 87392429573930662513617849766871793131053840056808626436739811257 }{ 429213680251880648249264766037197600094609654143583864750080000 } E^{(17)}_{8}(z) \big(E^{(17)}_{6}(z)\big)^2 \big(E^{(17)}_{4}(z)\big)^4\\&
- \tfrac{ 5203291809002722923420727059042670529678338299681100572348497 }{ 159222786180808051493227966983172186644783377160339456000000 } E^{(17)}_{8}(z) \big(E^{(17)}_{6}(z)\big)^4 E^{(17)}_{4}(z)\\&
- \tfrac{ 3408021881707620602850044141317857445104537752516243513916285865231 }{ 546558704090045971524110268119367250948382470285722746471055360000 } \big(E^{(17)}_{8}(z)\big)^2 \big(E^{(17)}_{4}(z)\big)^5\\&
- \tfrac{ 16613503534705813629198888518084937494696284987808069450102921171 }{ 95380817833751255166503281341599466687691034254129747722240000 } \big(E^{(17)}_{8}(z)\big)^2 \big(E^{(17)}_{6}(z)\big)^2 \big(E^{(17)}_{4}(z)\big)^2\\&
+ \tfrac{ 2084310764069464266375452181379302123943671896630614730410490282708941 }{ 24481275287366642474517439092846658115396298148214664685682688000000 } \big(E^{(17)}_{8}(z)\big)^3 \big(E^{(17)}_{4}(z)\big)^3\\&
- \tfrac{ 9922136522478992021059089148544040599546174236808342061149389 }{ 600269903901646354129469435526559143650833331894479749120000 } \big(E^{(17)}_{8}(z)\big)^3 \big(E^{(17)}_{6}(z)\big)^2\\&
+ \tfrac{ 39971724261482388723963548784518805970985444209456555554807081177551 }{ 580296895700542636433005963682291155327912252402125385142108160000 } \big(E^{(17)}_{8}(z)\big)^4 E^{(17)}_{4}(z)\\&
- \tfrac{ 12179813594881425731721530954876395006064827564865712237231709007 }{ 111595556865488968544808839169671376024598510077331804835020800 } E^{(17)}_{10}(z) E^{(17)}_{6}(z) \big(E^{(17)}_{4}(z)\big)^5\\&
- \tfrac{ 1625258630148098158844608861059428762679867410760523654188493 }{ 305707749467151458866997696607690598357984084147851755520000 } E^{(17)}_{10}(z) \big(E^{(17)}_{6}(z)\big)^3 \big(E^{(17)}_{4}(z)\big)^2\\&
+ \tfrac{ 201956165169824936446796453214912198922237546648448042931247350643 }{ 697472230409306053405055244810446100153740687983323780218880000 } E^{(17)}_{10}(z) E^{(17)}_{8}(z) E^{(17)}_{6}(z) \big(E^{(17)}_{4}(z)\big)^3\\&
+ \tfrac{ 2813492092804509777019550484166164259166371672470004179865987 }{ 110819059181842403839286665020287841904769230503596261376000 } E^{(17)}_{10}(z) E^{(17)}_{8}(z) \big(E^{(17)}_{6}(z)\big)^3\\&
- \tfrac{ 298150566267220507427939657298379519874877536521040468383984890437 }{ 2789888921637224213620220979241784400614962751933295120875520000 } E^{(17)}_{10}(z) \big(E^{(17)}_{8}(z)\big)^2 E^{(17)}_{6}(z) E^{(17)}_{4}(z)\\&
+ \tfrac{ 270142921637107712433606444209937936587102913556256973385136064101 }{ 1339146682385867622537706070036056512295182120927981658020249600 } \big(E^{(17)}_{10}(z)\big)^2 \big(E^{(17)}_{4}(z)\big)^4\\&
- \tfrac{ 118581244701243654625492701946386671875131600350921745366238181 }{ 1324733581024322988423656685299992592884597697974024273920000 } \big(E^{(17)}_{10}(z)\big)^2 \big(E^{(17)}_{6}(z)\big)^2 E^{(17)}_{4}(z)\\&
- \tfrac{ 622123658423176240663440801764445330920253599234807877389715511211 }{ 1287641040755641944747794298111592800283828962430751594250240000 } \big(E^{(17)}_{10}(z)\big)^2 E^{(17)}_{8}(z) \big(E^{(17)}_{4}(z)\big)^2\\&
- \tfrac{ 41865928593946018666000515901728601665582738769261235242020892533923 }{ 970881344729754026339836900776140971414007037672786702064680960000 } \big(E^{(17)}_{10}(z)\big)^2 \big(E^{(17)}_{8}(z)\big)^2\\&
+ \tfrac{ 226293023118065631604956427915797705760362816981994085929537 }{ 2676229456614793916007387243030288066433530702977826816000 } \big(E^{(17)}_{10}(z)\big)^3 E^{(17)}_{6}(z)\\&
+ \tfrac{ 65939988441096018663885334259995469677 }{ 1934740380512325216435446698160947200 } E^{(17)}_{12}(z) \big(E^{(17)}_{4}(z)\big)^6\\&
- \tfrac{ 47508694350054116027131578232203571279280588142292406574968917 }{ 496775092884121120658871256987497222331724136740259102720000 } E^{(17)}_{12}(z) \big(E^{(17)}_{6}(z)\big)^2 \big(E^{(17)}_{4}(z)\big)^3\\&
+ \tfrac{ 83239130800439493554048989758304142656207424379566457431 }{ 2818127349459872237687554172747939125917634265088000000 } E^{(17)}_{12}(z) \big(E^{(17)}_{6}(z)\big)^4\\&
+ \tfrac{ 18888098569562683617871650219704377045625972851708713833077441 }{ 99355018576824224131774251397499444466344827348051820544000 } E^{(17)}_{12}(z) E^{(17)}_{10}(z) E^{(17)}_{6}(z) \big(E^{(17)}_{4}(z)\big)^2\\&
- \tfrac{ 6458100747185096513157918629463271052359 }{ 48368509512808130410886167454023680000 } \big(E^{(17)}_{12}(z)\big)^2 \big(E^{(17)}_{4}(z)\big)^3\\&
+ \tfrac{ 13456181814083822984196529705199819074619 }{ 77183791775757654910988565086208000000 } \big(E^{(17)}_{12}(z)\big)^3
\end{align*}
and
\begin{align*}
\big(\Delta_{17}&(z)\big)^9 = - \tfrac{ 4410175152266863630497017095287573799108101287320169 }{ 513269785149806673002504728644869309011527884341248 } \big(E^{(17)}_{4}(z)\big)^9\\&
+ \tfrac{ 19865215328281078919219868581830673116116281279861 }{ 1077982783479943155001973186393454060492470353920 } \big(E^{(17)}_{6}(z)\big)^2 \big(E^{(17)}_{4}(z)\big)^6\\&
- \tfrac{ 1147994099850642662275857201554136108251932243 }{ 116332425049635581779544719243012827040972800 } \big(E^{(17)}_{6}(z)\big)^4 \big(E^{(17)}_{4}(z)\big)^3\\&
+ \tfrac{ 257163348099153057405937570593213576401 }{ 86387408378599422207040509547266048000 } \big(E^{(17)}_{6}(z)\big)^6\\&
+ \tfrac{ 4110275602561195487616512760454051197916582070385787933 }{ 147565063230569418488220109485399926340814266748108800 } E^{(17)}_{8}(z) \big(E^{(17)}_{4}(z)\big)^7\\&
- \tfrac{ 2455783752311086170178917777522781426586892700694851 }{ 33686961983748223593811662074795439390389698560000 } E^{(17)}_{8}(z) \big(E^{(17)}_{6}(z)\big)^2 \big(E^{(17)}_{4}(z)\big)^4\\&
+ \tfrac{ 1581775255838347728745765778157179068844765441801 }{ 99973177777030578091796243099464148238336000000 } E^{(17)}_{8}(z) \big(E^{(17)}_{6}(z)\big)^4 E^{(17)}_{4}(z)\\&
+ \tfrac{ 6518162027197225998646914560331274207300504121132929847 }{ 1844563290382117731102751368567499079260178334351360000 } \big(E^{(17)}_{8}(z)\big)^2 \big(E^{(17)}_{4}(z)\big)^5\\&
+ \tfrac{ 877428475040946870505912480572673877165899233742103 }{ 14971983103888099375027405366575750840173199360000 } \big(E^{(17)}_{8}(z)\big)^2 \big(E^{(17)}_{6}(z)\big)^2 \big(E^{(17)}_{4}(z)\big)^2\\&
- \tfrac{ 905386954382815429576749294608296584568282296576830392199 }{ 30742721506368628851712522809458317987669638905856000000 } \big(E^{(17)}_{8}(z)\big)^3 \big(E^{(17)}_{4}(z)\big)^3\\&
+ \tfrac{ 207452833460189538372130707619778910743078771025457 }{ 36182292501062906822982896302558064530418565120000 } \big(E^{(17)}_{8}(z)\big)^3 \big(E^{(17)}_{6}(z)\big)^2\\&
- \tfrac{ 6765708051219903828398888390867858547193923080811280631 }{ 273268635612165589793000202750740604334841234718720000 } \big(E^{(17)}_{8}(z)\big)^4 E^{(17)}_{4}(z)\\&
+ \tfrac{ 7025876804004356240055790621469114807967838822691 }{ 194096623064255692728887138824306132775652556800 } E^{(17)}_{10}(z) E^{(17)}_{6}(z) \big(E^{(17)}_{4}(z)\big)^5\\&
+ \tfrac{ 203944326653207551761076174261325691779672537 }{ 265856650044181038692865355610763385896960000 } E^{(17)}_{10}(z) \big(E^{(17)}_{6}(z)\big)^3 \big(E^{(17)}_{4}(z)\big)^2\\&
- \tfrac{ 88050066607840362983543089832425254378757106875786733 }{ 875861011577453813439103213944681424150132162560000 } E^{(17)}_{10}(z) E^{(17)}_{8}(z) E^{(17)}_{6}(z) \big(E^{(17)}_{4}(z)\big)^3\\&
- \tfrac{ 114754891200905341203611097297729345611512888589 }{ 13916266346562656470378037039445409434776371200 } E^{(17)}_{10}(z) E^{(17)}_{8}(z) \big(E^{(17)}_{6}(z)\big)^3\\&
+ \tfrac{ 66394449571915938902069992307694226884439057160324951 }{ 1751722023154907626878206427889362848300264325120000 } E^{(17)}_{10}(z) \big(E^{(17)}_{8}(z)\big)^2 E^{(17)}_{6}(z) E^{(17)}_{4}(z)\\&
- \tfrac{ 3664823227867792880990102284616506270153441589203601 }{ 52551660694647228806346192836680885449007929753600 } \big(E^{(17)}_{10}(z)\big)^2 \big(E^{(17)}_{4}(z)\big)^4\\&
+ \tfrac{ 14427551517079169370308214911137713786234600256489 }{ 415888419552447204861872371293770856671477760000 } \big(E^{(17)}_{10}(z)\big)^2 \big(E^{(17)}_{6}(z)\big)^2 E^{(17)}_{4}(z)\\&
+ \tfrac{ 34114607946828890598140117698005174033430842408608771 }{ 202121771902489341562869972448772636342338191360000 } \big(E^{(17)}_{10}(z)\big)^2 E^{(17)}_{8}(z) \big(E^{(17)}_{4}(z)\big)^2\\&
+ \tfrac{ 1186277940138861135501685541633367245343342399481395343 }{ 76199908007238481769201979613187283901061498142720000 } \big(E^{(17)}_{10}(z)\big)^2 \big(E^{(17)}_{8}(z)\big)^2\\&
- \tfrac{ 2240074672005345691936094582673021223667749558747 }{ 73095540406187690551480598591026392990744576000 } \big(E^{(17)}_{10}(z)\big)^3 E^{(17)}_{6}(z)\\&
- \tfrac{ 473713406463236803998887 }{ 40792493008974879129600 } E^{(17)}_{12}(z) \big(E^{(17)}_{4}(z)\big)^6\\&
+ \tfrac{ 187627181944563944553278965376532704223825410987 }{ 5030908301037667800748456104360131330703360000 } E^{(17)}_{12}(z) \big(E^{(17)}_{6}(z)\big)^2 \big(E^{(17)}_{4}(z)\big)^3\\&
- \tfrac{ 3176432730003963047610699437833910664552631 }{ 221181554210724382719750270133948416000000 } E^{(17)}_{12}(z) \big(E^{(17)}_{6}(z)\big)^4\\&
- \tfrac{ 2144787933823513840784295072436611609578065848101 }{ 31191631466433540364640427847032814250360832000 } E^{(17)}_{12}(z) E^{(17)}_{10}(z) E^{(17)}_{6}(z) \big(E^{(17)}_{4}(z)\big)^2\\&
+ \tfrac{ 46882982116711758510391631 }{ 1019812325224371978240000 } \big(E^{(17)}_{12}(z)\big)^2 \big(E^{(17)}_{4}(z)\big)^3\\&
- \tfrac{ 4639965815125172171338200503 }{ 76485924391827898368000000 } \big(E^{(17)}_{12}(z)\big)^3.
\end{align*}

Using the exact identity of the Hauptmodul in terms of Eisenstein series, we can read off that
${E_{4}^{(17)}}(z)$, ${E_{6}^{(17)}}(z)$, ${E_{8}^{(17)}}(z)$, ${E_{10}^{(17)}}(z)$, and
${E_{12}^{(17)}}(z)$ generate the holomorphic Eisenstein series $E_{k}^{(17)}(z)$
for all even $k\ge4$.

\section{Concluding remarks}

\subsection{Known relations for Hauptmoduli}

\begin{table}
\caption{\label{tab:j known}Known expressions of the Hauptmoduli $j_N$ for the genus zero groups $\Gamma_0(N)^+$.}
\small
$$
\begin{array}{rcl}
N && \qquad \text{Formula for } t_N =j_N + const. \\[1ex]
2 && t_2=\big(\frac{\eta(z)}{\eta(2z)} \big)^{24} + 4096 \big(\frac{\eta(2z)}{\eta(z)} \big)^{24} \\
3 && t_3=\big(\frac{\eta(z)}{\eta(3z)} \big)^{12} + 729 \big(\frac{\eta(3z)}{\eta(z)} \big)^{12} \\
5 && t_5=\big(\frac{\eta(z)}{\eta(5z)} \big)^{6} + 125 \big(\frac{\eta(5z)}{\eta(z)} \big)^{6} \\
6 && t_6 = \big(\frac{\eta(z) \eta(2z)}{\eta(3z) \eta(6z)} \big)^4 + 81 \big(\frac{\eta(3z) \eta(6z)}{\eta(z) \eta(2z)} \big)^4 =\big(\frac{\eta(z) \eta(3z)}{\eta(2z) \eta(6z)} \big)^6 + 64 \big(\frac{\eta(2z) \eta(6z)}{\eta(z) \eta(3z)} \big)^6 +c_1 = \big(\frac{\eta(2z) \eta(3z)}{\eta(z) \eta(6z)} \big)^{12} + \big(\frac{\eta(z) \eta(6z)}{\eta(2z) \eta(3z)} \big)^{12} +c_2 \\
7 && t_7=\big(\frac{\eta(z)}{\eta(7z)} \big)^{4} + 49 \big(\frac{\eta(7z)}{\eta(z)} \big)^{4} \\
10 && t_{10} = \big(\frac{\eta(z) \eta(2z)}{\eta(5z) \eta(10z)} \big)^2 + 25 \big(\frac{\eta(5z) \eta(10z)}{\eta(z) \eta(2z)} \big)^2 = \big(\frac{\eta(z) \eta(5z)}{\eta(2z) \eta(10z)} \big)^4 + 16 \big(\frac{\eta(2z) \eta(10z)}{\eta(z) \eta(5z)} \big)^4 +c_1 = \big(\frac{\eta(2z) \eta(5z)}{\eta(z) \eta(10z)} \big)^6 + \big(\frac{\eta(z) \eta(10z)}{\eta(2z) \eta(5z)} \big)^6 + c_2\\
11 && t_{11}= \big(\frac{\theta(2,2,6)}{\eta(z)\eta(11z)} \big)^2 = \big(\frac{\eta(z) \eta(11z)}{\eta(2z) \eta(22z)} \big)^2 + 16 \big(\frac{\eta(2z) \eta(22z)}{\eta(z) \eta(11z)} \big)^2 +16 \big(\frac{\eta(2z) \eta(22z)}{\eta(z) \eta(11z)} \big)^4 +c_1 \\
13 && t_{13} = \big(\frac{\eta(z)}{\eta(13z)} \big)^{2} + 13 \big(\frac{\eta(13z)}{\eta(z)} \big)^{2} \\
14 && t_{14}= \big(\frac{\eta(z) \eta(7z)}{\eta(2z) \eta(14z)} \big)^3 + 8\big(\frac{\eta(2z) \eta(14z)}{\eta(z) \eta(7z)} \big)^3 = \big(\frac{\eta(2z) \eta(7z)}{\eta(z) \eta(14z)} \big)^4 + \big(\frac{\eta(z) \eta(14z)}{\eta(2z) \eta(7z)} \big)^4 +c_1 \\
15 && t_{15}= \big(\frac{\eta(z) \eta(5z)}{\eta(3z) \eta(15z)} \big)^2 + 9\big(\frac{\eta(3z) \eta(15z)}{\eta(z) \eta(5z)} \big)^2 = \big(\frac{\eta(3z) \eta(5z)}{\eta(z) \eta(15z)} \big)^3 + \big(\frac{\eta(z) \eta(15z)}{\eta(3z) \eta(5z)} \big)^3 +c_1\\
17 && t_{17}= \big( \frac{\theta_x(\frac{1}{2}, 0, \frac{17}{2}) -\theta_y(\frac{1}{2}, 0, \frac{17}{2}) }{2\eta(z)\eta(17z)}\big)^2 \\
19 && t_{19}= \big(\frac{2\theta(2,2,10)}{\theta(1,2,20) -\theta(4,2,5)} \big)^2 \\
21 && t_{21}= \big(\frac{\eta(z) \eta(3z)}{\eta(7z) \eta(21z)} \big) + 7 \big(\frac{\eta(7z) \eta(21z)}{\eta(z) \eta(3z)} \big) = \big(\frac{\eta(3z) \eta(7z)}{\eta(z) \eta(21z)} \big)^2 + \big(\frac{\eta(z) \eta(21z)}{\eta(3z) \eta(7z)} \big)^2 +c_1 \\
22 && t_{22} = \big(\frac{\eta(z) \eta(11z)}{\eta(2z) \eta(22z)} \big)^2 + 4 \big(\frac{\eta(2z) \eta(22)}{\eta(z) \eta(11z)} \big)^2 \\
23 && t_{23} =\big(\frac{\theta(2,2,12)}{\eta(z)\eta(23z)} \big) = \big(\frac{\eta(z) \eta(23z)}{\eta(2z) \eta(46z)} \big) + 4 \big(\frac{\eta(2z) \eta(46z)}{\eta(z) \eta(23z)} \big) +4 \big(\frac{\eta(2z) \eta(46z)}{\eta(z) \eta(23z)} \big) ^2 +c_1 \\
26 && t_{26}= \big(\frac{\eta(2z) \eta(13z)}{\eta(z) \eta(26z)} \big)^2 + \big(\frac{\eta(z) \eta(26z)}{\eta(2z) \eta(13z)} \big)^2 \\
29 && t_{29} = \frac{\theta_x(\frac{1}{2}, 0, \frac{29}{2}) -\theta_y(\frac{1}{2}, 0, \frac{29}{2}) }{2\eta(z)\eta(29z)} \\
30 && t_{30} = \big(\frac{\eta(z) \eta(6z) \eta(10z)\eta(15z)}{\eta(2z) \eta(3z) \eta(5z) \eta(30z)} \big)^3 + \big(\frac{\eta(z) \eta(6z) \eta(10z)\eta(15z)}{\eta(2z) \eta(3z) \eta(5z) \eta(30z)} \big)^{-3} = \big(\frac{\eta(z) \eta(3z) \eta(5z)\eta(15z)}{\eta(2z) \eta(6z) \eta(10z) \eta(30z)} \big)+ 4\big(\frac{\eta(z) \eta(3z) \eta(5z)\eta(15z)}{\eta(2z) \eta(6z) \eta(10z) \eta(30z)} \big)^{-1} + c_1 \\
30 && t_{30} = \big(\frac{\eta(3z) \eta(5z) \eta(6z)\eta(10z)}{\eta(z) \eta(2z) \eta(15z) \eta(30z)} \big) + \big(\frac{\eta(3z) \eta(5z) \eta(6z)\eta(10z)}{\eta(z) \eta(2z) \eta(15z) \eta(30z)} \big)^{-1} +c_2 = \big(\frac{\eta(2z) \eta(3z) \eta(10z)\eta(15z)}{\eta(z) \eta(5z) \eta(6z) \eta(30z)} \big)^2+ \big(\frac{\eta(2z) \eta(3z) \eta(10z)\eta(15z)}{\eta(z) \eta(5z) \eta(6z) \eta(30z)} \big)^{-2} + c_3 \\
31 && t_{31}= \big(\frac{\theta(2,2,16) - \theta(4,2,8)}{2\eta(z)\eta(31z)} \big)^3 \\
33 && t_{33}= \big(\frac{\eta(z) \eta(11z)}{\eta(3z) \eta(33z)} \big) + 3\big(\frac{\eta(z) \eta(11z)}{\eta(3z) \eta(33z)} \big)^{-1} \\
34 && t_{34} \text{ is deduced from the formula } t_{34}^2(z) + t_{34}(z) - 6=j_{17}(z) + j_{17}(2z) \\
35 && t_{35}= \big(\frac{\eta(5z) \eta(7z)}{\eta(z) \eta(35z)} \big) -\big(\frac{\eta(5z) \eta(7z)}{\eta(z) \eta(35z)} \big)^{-1} \\
38 && t_{38} \text{ is deduced from the formula } t_{38}^2(z) + t_{38}(z) - 4=j_{19}(z) + j_{19}(2z) \\
39 && t_{39}= \big(\frac{\eta(3z) \eta(13z)}{\eta(z) \eta(39z)} \big) + \big(\frac{\eta(3z) \eta(13z)}{\eta(z) \eta(39z)} \big)^{-1} \\
41 && t_{41}= \frac{\theta_x(\frac{3}{2}, 2, \frac{15}{2}) -\theta_y(\frac{3}{2}, 2, \frac{15}{2}) }{2\eta(z)\eta(41z)} \\
42 && t_{42}= \big(\frac{\eta(z) \eta(6z) \eta(14z)\eta(21z)}{\eta(2z) \eta(3z) \eta(7z) \eta(42z)} \big)^2 + \big(\frac{\eta(z) \eta(6z) \eta(14z)\eta(21z)}{\eta(2z) \eta(3z) \eta(7z) \eta(42z)} \big)^{-2} = \big(\frac{\eta(2z) \eta(6z) \eta(7z)\eta(21z)}{\eta(z) \eta(3z) \eta(14z) \eta(42z)} \big) + \big(\frac{\eta(2z) \eta(6z) \eta(7z)\eta(21z)}{\eta(z) \eta(3z) \eta(14z) \eta(42z)} \big)^{-1}+c_1 \\
46 && t_{46}= \big(\frac{\eta(z) \eta(23z)}{\eta(2z) \eta(46z)} \big) + 2\big(\frac{\eta(z) \eta(23z)}{\eta(2z) \eta(46z)} \big)^{-1} \\
47 && t_{47}= \frac{\theta(2,2,24) - \theta(4,2,12)}{2\eta(z)\eta(47z)} \\
51 && t_{51} \text{ is deduced from the formula } t_{51}^3(z) -2 t_{51}(z) - 6=j_{17}(z) + j_{17}(3z) \\
55 && t_{55} \text{ is deduced from the formula } t_{55}^5(z)-10 t_{55}^3(z) - 5t_{55}^2(z)+16 t_{55}(z)=j_{11}(z) + j_{11}(5z) \\
59 && t_{59}= \frac{2\theta(6,2,10)}{\theta(2,2,30) - \theta(6,2,10)}\\
62 && t_{62} \text{ is deduced from the formula } t_{62}^2(z)+t_{62}(z)-2=j_{31}(z)+j_{31}(2z) \\
66 && t_{66}= \big(\frac{\eta(2z) \eta(3z) \eta(22z)\eta(33z)}{\eta(z) \eta(6z) \eta(11z) \eta(66z)} \big) + \big(\frac{\eta(2z) \eta(3z) \eta(22z)\eta(33z)}{\eta(z) \eta(6z) \eta(11z) \eta(66z)} \big)^{-1}\\
69 && t_{69} \text{ is deduced from the formula } t_{69}^3(z) -2 t_{69}(z) - 3=j_{23}(z) + j_{23}(3z) \\
70 && t_{70} =\big(\frac{\eta(z) \eta(10z) \eta(14z)\eta(35z)}{\eta(2z) \eta(5z) \eta(7z) \eta(70z)} \big) +\big(\frac{\eta(z) \eta(10z) \eta(14z)\eta(35z)}{\eta(2z) \eta(5z) \eta(7z) \eta(70z)} \big)^{-1} \\
71 && t_{71}= \frac{\theta(4,2,18) - \theta(6,2,12)}{2\eta(z)\eta(71z)} \\
78 && t_{78}= \big(\frac{\eta(z) \eta(6z) \eta(26z)\eta(39z)}{\eta(2z) \eta(3z) \eta(13z) \eta(78z)} \big) + \big(\frac{\eta(z) \eta(6z) \eta(26z)\eta(39z)}{\eta(2z) \eta(3z) \eta(13z) \eta(78z)} \big)^{-1} \\
87 && t_{87} \text{ is deduced from the formula } t_{87}^3(z)+ t_{87}(z)-3 =j_{29}(z) + j_{29}(3z) \\
94 && t_{94} \text{ is deduced from the formula } t_{94}^2(z)+ t_{94}(z) -2 =j_{47}(z) + j_{47}(2z) \\
95 && t_{95} \text{ is deduced from the formula } t_{95}^5(z)-3 t_{95}^3(z)+ t_{95}(z) -2=j_{19}(z) + j_{19}(5z) \\
105 && t_{105} \text{ is deduced from the formula } t_{105}^3(z)-2 t_{105}(z) -3 =j_{35}(z) + j_{35}(3z) \\
110 && t_{110} \text{ is deduced from the formula } t_{110}^2(z)+ t_{110}(z) =j_{55}(z) + j_{55}(2z) \\
119 && t_{119} \text{ is deduced from the formula } t_{119}^7(z) - 7t_{119}^3(z) -7 t_{119}^2(z) -6 t_{119}(z) -7 =j_{17}(z) + j_{17}(7z) \\
\end{array}
$$
\normalsize
\vspace*{4cm} %
\end{table}

In \cite{CN79}, the authors computed expressions for $j_{N}$, up to an additive
constant. The authors call their function $t_{N}$. The data from \cite{CN79}
relates to genus zero groups $\Gamma_0(N)^+$ with square-free level $N$
as given in Table~\ref{tab:j known}, using the Dedekind eta function together with
$\theta(a,b,c)$, which is the theta function defined by the series
$$
\theta(a,b,c)= \sum_{(x,y)\in \Z^2} q^{\frac{1}{2}(ax^2 + bxy + cy^2)}.
$$
Additionally, one has, in the notation of \cite{CN79}, the functions $\theta_x(a,b,c)$
and $\theta_y(a,b,c)$ which are defined by the same series which defines $\theta(a,b,c)$
except one restricts the sum to odd values of $x$ and $y$, respectively. By combining our
results with the relations for the Hauptmoduli in Table~\ref{tab:j known},
it is possible to deduce many potentially interesting relations between classical Eisenstein
series $E_{k}(z)$, eta functions and theta functions.

For example, let us take $N=17$. In the notation of Theorem~\ref{thm:generators} one has
$M_{17}=9$ and the Hauptmodul $j_{17}(z)$ is given as a rational function of the form
$$
j_{17}(z)= \frac{P_{17}(E_4^{(17)}, E_6^{(17)},E_8^{(17)},E_{10}^{(17)},
E_{12}^{(17)})}{Q_{17}(E_4^{(17)}, E_6^{(17)},E_8^{(17)},E_{10}^{(17)},E_{12}^{(17)})},
$$
where $P_{17}$ and $Q_{17}$ denote polynomials of degree 9 in five variables with integer
coefficients, where coefficients are non-zero only if
the sum of products of weights and corresponding degrees is equal to $36$.

In a sense, this result is a direct analogue of formula~\eqref{j-invariant}
expressing the classical $j$-invariant for $\PSL(2,\Z)$ in terms of
classical holomorphic Eisenstein series.

Furthermore, formula~\eqref{E_k, p proposit fla} implies that the Eisenstein series
$E_{2k}^{(17)}$, for $k=2,3,4,5,6$ may be expressed as a linear combination of dilations
of series $E_{2k}$, hence the function
$$
\left( \frac{\theta_x(\frac{1}{2}, 0, \frac{17}{2}) -\theta_y(\frac{1}{2}, 0, \frac{17}{2}) }{
2\eta(z)\eta(17z)}\right)^2
$$
is a rational function in the Eisenstein series
$E_4(z)$, $E_4(17z)$, $E_6(z)$, $E_6(17 z)$, $E_8(z)$, $E_8(17z)$, $E_{10}(z)$, $E_{10}(17 z)$,
$E_{12}(z)$ and $E_{12}(17 z)$ with integer coefficients.

Proceeding in a similar manner, for example when $N=29$ or $N=47$, we obtain other
relations between theta functions, eta functions and holomorphic Eisenstein series $E_{2k}$.

%

\subsection{Groups $\Gamma_0(N)^+$ of higher genus}

There are $38$ different square-free levels $N$ such that $X_{N}$ has genus one. Similarly,
there are $39$ and $31$ different square-free $N$ such that $X_{N}$ has genus two and three,
respectively. In \cite{JST2}, the authors studied the $q$-expansions for the corresponding
function fields, proving that each function field admits two generators with various
properties, such as minimal pole at infinity and integer coefficients. In particular, a
polynomial relation was computed for each pair of generators, thus
giving an algebraic equation for the corresponding projective curve. In future studies, we
plan to investigate the various properties of these elliptic (genus one) and hyperelliptic
(genus two) curves. There are a vast number of problems, both arithmetic and analytic, to be
considered given that one knows the uniformizing group, a projective equation, $q$-expansions,
and relations to holomorphic Eisenstein series.

\end{document}